\newcommand{\intav}[1]{\mathchoice {\mathop{\vrule width 6pt height 3 pt depth  -2.5pt
\kern -8pt \intop}\nolimits_{\kern -6pt#1}} {\mathop{\vrule width
5pt height 3  pt depth -2.6pt \kern -6pt \intop}\nolimits_{#1}}
{\mathop{\vrule width 5pt height 3 pt depth -2.6pt \kern -6pt
\intop}\nolimits_{#1}} {\mathop{\vrule width 5pt height 3 pt depth
-2.6pt \kern -6pt \intop}\nolimits_{#1}}}
\def\polhk#1{\setbox0=\hbox{#1}{\ooalign{\hidewidth\lower1.5ex\hbox{`}\hidewidth\crcr\unhbox0}}}
\renewcommand{\div}{\operatorname{div}}
\newcommand{\osc}{\operatorname{osc}}
\renewcommand{\div}{\operatorname{div}}
\newcommand{\spec}{\operatorname{Spec}}
\newcommand{\Span}{\operatorname{Span}}
\newtheorem{theorem}{Theorem}
\newtheorem{definition}{Definition}
\newtheorem{lemma}{Lemma}
\newtheorem{proposition}{Proposition}
\newtheorem{Assumption}{A}
\begin{document}

\title{${C}^1$--regularity for  degenerate diffusion equations}
\author{P. Andrade \and D. Pellegrino \and E. Pimentel \and E. Teixeira}

\begin{abstract}

\noindent We prove that any solution of a degenerate elliptic PDE is of class $C^1$ provided the inverse of equation's degeneracy law satisfies an integrability criterium,  viz. $\sigma^{-1} \in L^1\left (\frac{1}{\lambda} {\bf d}\lambda\right )$.  The proof is based upon the construction of a sequence of converging tangent hyperplanes that approximate $u(x)$, near $x_0$, by an error of order $\text{o}(|x-x_0|)$. Explicit control of such hyperplanes are carried over through the construction, yielding universal estimates upon the ${C}^1$--regularity of solutions. Among the main new ingredients required in the proof, we develop an alternative recursive algorithm for renormalization of approximating solutions. This new method is based on a technique tailored to prevent the sequence of degeneracy laws constructed through the process from being, itself, degenerate. 

\vspace{.1in}

\noindent \textsc{Key-words}: regularity theory, degenerate elliptic equations, differentiability of solutions.

\vspace{.05in}

\noindent\textsc{MSC 2020}: 35B65; 35J70; 35D40; 37J60.

\end{abstract}

\maketitle

\section{Introduction} \label{introduction}
In this article we examine solutions to degenerate nonlinear elliptic equations of the form
\begin{equation}\label{sigma-eq}
	\mathscr{F}( D u, D^2 u)\,=\,f(x) \quad \mbox{in }  B_1.
\end{equation}
The source function $f(x)$ is assumed bounded and the nonlinear operator $\mathscr{F} \colon \mathbb{R}^d \times \mathcal{S}(d) \to \mathbb{R}$ is degenerate elliptic, with law of degeneracy $\sigma$. This means $ \mathscr{F} (\vec{p}, M) = \sigma (|\vec{p}|) F(M),$
for an operator $F\colon \mathcal{S}(d) \to \mathbb{R}$, representing the {\it diffusion agent} of the model and $\sigma$ is a modulus of continuity, otherwise refereed as the {\it law of degeneracy} of the equation; precise definitions will be given later. 

Diffusion processes whose ellipticity is affected by a gradient-dependent term are of fundamental relevance in analysis of partial differential equations. A paramount example --- in the variational setting --- is the $p$-Poisson equation, $\div\left(\mathbf{a}(Du)\right)\,=\,f$ where $\mathbf{a}(\vec{v}) \sim \left |\vec{v} \right |^{p-2} \vec{v}$. The prototype of the theory is the classical $p$-Laplacian, which appears in connection with the optimization problem of the $p$-Dirichlet integral and accounts for a variety of important models in life and social sciences. A robust nonlinear potential theory for treating variational problems with gradient degeneracy has been developed as an offspring of the pioneering work of De Giorgi \cite{DG} and since then it has been a rich and powerful line of research. In this article, though, we are interested in a non-variational counterpart of the theory, to whom Krylov--Safonov work \cite{KS}  plays the role of De Giorgi's. 

Heuristically, the law of degeneracy $\sigma$ impairs the diffusibility attributes of the model near a critical point. The stronger the degeneracy law is, the less efficiently the model diffuses, which in turn affects the smoothing properties of the system. That is, using the natural order for laws of degeneracy:
$$
	\sigma_1  \prec \sigma_2 \quad \text{provided} \quad \sigma_1(t) = \text{o}\left ( \sigma_2(t) \right ),
$$
one should expect that if $\sigma_1  \prec \sigma_2$ then the class of solutions of equations with $\sigma_2$ law of degeneracy should be quantitatively smoother than the corresponding class for $\sigma_1$. A regularity theory for solutions of such equations would then be the mathematical manifestation of diffusibility impairment caused by degeneracy. 

Through such an endeavor, a central question emerges: how much degenerate a diffusion process can be so that differentiability of solutions is yet preserved? 

A classical result obtained independently by Trundiger \cite{Trundiger} and Caffarelli \cite{Caffarelli} asserts that if $\sigma \sim 1$, that is, if the equation is non-degenerate, otherwise termed uniformly elliptic, then solutions are locally of class ${C}^{1,\alpha}$, see \cite{NV, NV2, NV3} for the sharpness of such a regularity. If no condition whatsoever is imposed upon the law of degeneracy $\sigma$, then solutions may fail to be differentiable. In this case the best one can expect is local H\"older continuity, see \cite{Ishii-Lions} and also \cite{D}, \cite{IS2}. The goal of this paper is to examine minimal conditions upon $\sigma$ as to assure solutions retain ${C}^1$--differentiability properties. 

As it happens in many branches of mathematical analysis,  ${C}^1$ estimate is indeed conceptually more difficult as it represents a critical borderline regularity. 

The heuristic discussion above conveys that such a condition should somehow prevent $\sigma(\vec{p})$ from approaching $0$ too abruptly. Our main result captures this insight in a clear and concise format:

\begin{theorem}[Differentiability of solutions]\label{main_theorem1}
Let $u \in {C}^{0}(B_1)$ be a viscosity solution of a diffusive process
\begin{equation}\label{eq_fnldeg}
\mathscr{F}( D u, D^2u) = f \quad \mbox{in} \quad B_1,
\end{equation}
where $\mathscr{F}(\vec{p},M) = \sigma(|\vec{p}|) F(M)$, where $\sigma(\cdot)$ is a modulus of continuity with inverse $\sigma^{-1}$. Assume the diffusion agent $F$ is uniformly elliptic and the law of degeneracy $\sigma$ is such that  $\sigma^{-1} \in L^1\left ((0, \tau);\frac{1}{\lambda}  {\bf d} \lambda \right )$. Then $u \in {C}^1_{loc}(B_1)$ and there exists a modulus of continuity $\omega \colon \mathbb{R}^+_0\to\mathbb{R}^+_0$ depending only upon  dimension, ellipticity constants, $\sigma$, $\left\|u\right\|_{L^\infty(B_1)}$, and $\left\|f\right\|_{L^\infty(B_1)}$ such that
\[
	 \left|Du(x)\,-\,Du(y) \right|\,\leq\,\omega\left(|x-y|\right),
\]
for every $x, y \in B_{1/4}$.  
\end{theorem}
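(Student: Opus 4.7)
The plan is to show that, for each $x_0 \in B_{1/4}$, the solution $u$ is differentiable at $x_0$ with a modulus of continuity for $Du$ that is independent of $x_0$. The strategy is the classical one of building a sequence of tangent affine functions $\{\ell_k\}$ satisfying
\[
\sup_{B_{\rho^k}(x_0)} |u-\ell_k| \,\le\, \mu_k \rho^k,\qquad \mu_k\downarrow 0,
\]
and controlling the Cauchy property of $\{D\ell_k\}$ via the integrability hypothesis on $\sigma^{-1}$. The two main ingredients are a compactness/approximation lemma producing a tangent hyperplane at each scale, and a renormalization mechanism that keeps the law of degeneracy from deteriorating along the iteration.

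First I would set up an approximation lemma: for $\|u\|_\infty\le 1$ and $\|f\|_\infty\le\delta$ small, a viscosity solution of $\sigma(|Du|)F(D^2u)=f$ is $L^\infty$-close in a smaller ball to a viscosity solution $h$ of $F(D^2 h)=0$. By the Krylov–Safonov/Caffarelli theory, $h\in C^{1,\alpha}$, so $h$ admits an affine tangent plane $\ell$ with explicit $o(r)$-decay. Transferring this to $u$ gives the basic flatness-improvement step: whenever $\sup_{B_r}|u-\ell|\le \mu r$ with $|D\ell|=\lambda$ sufficiently large compared to $\mu$, we can produce $\ell'$ with $\sup_{B_{\rho r}}|u-\ell'|\le \tfrac{1}{2}\mu \rho r$ and $|D\ell'-D\ell|\le C\mu$. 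In this non-critical regime, the equation is effectively uniformly elliptic (with ellipticity $\sim \sigma(\lambda)$) and the iteration produces geometric decay directly.

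The delicate step is the critical regime, where $D\ell_k\to 0$ along the iteration. The natural renormalization
\[
v_k(y)\,=\,\frac{u(x_0+r_k y)-\ell_k(x_0+r_k y)}{r_k\mu_k}
\]
produces an equation of the form $\tilde{\sigma}_k(|Dv_k|)\tilde{F}_k(D^2v_k)=\tilde f_k$, and the naive choice of the amplitude $\mu_k$ drives $\tilde\sigma_k$ to be \emph{strictly more degenerate} than $\sigma$, so compactness across scales is lost. The novel idea of the paper, which I would import, is to modify the amplitude factor so that at every step the resulting $\tilde\sigma_k$ remains controlled by $\sigma$ (equivalently, $v_k$ falls into the same family of equations as $u$). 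This is the technical heart of the argument: the cost of such a re-normalization is quantifiable, and each step adds a controlled increment of size $\lesssim\sigma^{-1}(\mu_k)$ to the updated hyperplane.

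Running the iteration on the dyadic scales $\mu_k=2^{-k}$, in either regime the gradient increments satisfy $|D\ell_{k+1}-D\ell_k|\lesssim \sigma^{-1}(2^{-k})$, which is summable precisely because
\[
\sum_{k\ge 0}\sigma^{-1}(2^{-k})\,\asymp\,\int_0^{\tau}\frac{\sigma^{-1}(\lambda)}{\lambda}\,\dx\lambda\,<\,\infty.
\]
Hence $\{D\ell_k\}$ is Cauchy; its limit identifies $Du(x_0)$, and a standard diagonal argument on pairs $x,y\in B_{1/4}$ (choosing $k$ with $\rho^{k+1}\le |x-y|\le \rho^k$) yields the modulus of continuity
\[
\omega(t)\,\lesssim\,\int_0^{Ct}\frac{\sigma^{-1}(\lambda)}{\lambda}\,\dx\lambda,
\]
which vanishes as $t\to 0^+$ thanks to the hypothesis. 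The principal obstacle, and the point where I would expect the argument to require the most care, is the critical-regime renormalization: one must orchestrate the amplitude and spatial rescaling jointly, so the equations satisfied by the $v_k$ stay within one universal class for which the approximation lemma applies uniformly across scales. Once this is achieved, universality of the constants in the approximation lemma transfers to universality of $\omega$, as claimed.
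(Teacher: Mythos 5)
Your plan captures the right high‑level architecture — approximate by a solution of $F=0$, iterate tangent hyperplanes, sum increments using the Dini hypothesis — but the crucial technical obstacle is not resolved, only named, and the quantitative conclusion you draw from it does not actually follow.

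The real difficulty is a \emph{tension} between two competing requirements on the amplitude factors $\mu_k$. On one side, after each renormalization $v_k = (u_{k-1}(r\cdot)-\ell_{k-1}(r\cdot))/(\mu_k r)$ the rescaled degeneracy law is $\sigma_k(t)=\frac{\mu_1\cdots\mu_k}{r^k}\sigma(\mu_1\cdots\mu_k\,t)$, and you must keep the family $\{\sigma_k\}$ from collapsing (otherwise the approximation lemma fails to give a \emph{uniform} $\epsilon$--$\delta$ across scales, since in the limit any slowly varying function would be a ``solution''). On the other side, you need $\sum_k \prod_{i\le k}\mu_i<\infty$ to get the Cauchy property of $\{D\ell_k\}$, which pushes $\mu_k$ down. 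Your write‑up says ``modify the amplitude so $\tilde\sigma_k$ stays controlled by $\sigma$'', but you do not exhibit a choice of $\mu_k$ that achieves both; and the naive attempt to anchor $\sigma_k$ at a fixed threshold (e.g., $\sigma_k(1)\ge 1$) is \emph{not} enough to prevent collapsing, because one must control $\sigma_k$ on a sequence of points tending to $0$, which in turn degrades the estimate on $\prod\mu_i$ by a factor tending to infinity. The paper's resolution is a separate combinatorial lemma (its ``$c_0$‑modulator'' Lemma \ref{DP}): there is a sequence $(c_k)\in c_0$, depending on the $\ell_1$ sequence $(\sigma^{-1}(\theta^k))$, with $\sum_k \sigma^{-1}(\theta^k)/c_k$ still finite. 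Requiring $\sigma_k(c_k)\ge 1$ then gives non‑collapsing via the ``shoring‑up'' mechanism, and yields the estimate $\prod_{i\le k}\mu_i\le \sigma^{-1}(\theta^k)/c_k$, which is summable \emph{because of} Lemma \ref{DP}. Without this lemma and the non‑collapsing definition there is no way to carry the approximation lemma through the iteration. This is a missing idea, not a routine filling‑in.

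Two further points where the proposal departs from what is provable. First, $\mu_k$ cannot be taken as the fixed dyadic sequence $2^{-k}$; in the paper $\mu_k$ is determined by an algorithm and is \emph{nondecreasing} (and bounded below by $\mu_1>r$), precisely because the shoring‑up requirement may force $\mu_{k+1}>\mu_k$. The gradient increment is $\prod_{i\le k}\mu_i$, not $\sigma^{-1}(2^{-k})$, and the two are not comparable without the $c_0$‑modulator bound. Second, the explicit formula $\omega(t)\lesssim\int_0^{Ct}\sigma^{-1}(\lambda)\lambda^{-1}\,{\bf d}\lambda$ is stronger than what is obtained; the paper's modulus involves $\sum_{i\ge\lfloor\ln t^{-1}\rfloor}\prod_{j\le i}\mu_j$, which depends on the $c_k$'s and hence on $\sigma$ in a way that is \emph{not} controlled by $\|\sigma^{-1}\|_{L^1(\lambda^{-1}\,{\bf d}\lambda)}$ alone. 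Indeed the paper proves (by constructing a counterexample after Lemma \ref{DP}) that no universal $c_k$ exists for the whole unit sphere of $\ell_1$, and its Final Remarks explicitly disclaim the kind of clean estimate you assert. So the final quantitative claim in your proposal is too strong and should be removed or weakened.
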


In particular, if $\sigma^{-1}$ behaves as a H\"older continuous function near the origin, then solutions are in fact locally ${C}^{1,\gamma}$, for some $0< \gamma < 1$. This accounts for non-linear elliptic PDEs with power--like degeneracy laws, $\sigma(|\vec{p}|) = \text{O} \left ( |\vec{p}|^M \right )$, as $\vec{p} \to 0$, for some $M>0$, and thus Theorem \ref{main_theorem1} extends the results from \cite{Imbert-Silvestre2013}, see also \cite{Araujo-Ricarte-Teixeira2015}. %It is also interesting to notice that condition $\sigma^{-1} \in L^1(\frac{1}{\lambda}  {\bf d} \lambda)$ is equivalent to $t\frac{\sigma'(t)}{\sigma(t)} \in L^1({\bf d} t)$. This observation should allude to the variational regularity theory introduced by Lieberman in \cite{Lieberman}. 

We conclude the introduction by describing the essences of the strategy  for  proving Theorem \ref{main_theorem1}. Given a point $x_0 \in B_{1/4}$, we want to attain the existence of a tangent hyperplane $\mathscr{H}_{x_0} = \ell_{x_0}^{-1}(0)$ and a modulus of continuity $\omega$ such that
$$
	\sup\limits_{x\in B_\gamma (x_0)} \left |u(x) - \ell_{x_0}(x)  \right | \le \gamma \omega(\gamma), 
$$
for all $0< \gamma \le 1/4$. This is achieved by means of a geometric recursive construction. Given a family of laws of degeneracy $\Sigma$, define the functional space $\Xi_{\epsilon, \lambda, \Lambda, \Sigma}$ to be the set of all continuous functions $ u\in C(B_1) $ such that  $\|u\|_\infty \le 1$   and  
$$
	\left |  \mathscr{F}( D u, D^2u) \right | < \epsilon
$$
 in the viscosity sense, for an operator $\mathscr{F}(\vec{p},M) = \sigma(\vec{p}) F(M)$, with $F$ $(\lambda, \Lambda)$ and $\sigma \in \Sigma$. Then for some positive $\beta>0$ there exists a modulus of continuity $\tau$ such that
\begin{equation}\label{idea1}
 	 \Xi_{\epsilon, \lambda, \Lambda, \Sigma} \Big |_{B_{1/2} }  \subset \mathscr{N}_{\tau(\epsilon)} \left ( {C}^{1,\beta}(B_{1/2}) \right ) 
\end{equation}
where $ \Xi_{\epsilon, \lambda, \Lambda, \Sigma} \Big |_{B_{1/2} }$ simply represents the restriction of functions in $ \Xi_{\epsilon, \lambda, \Lambda, \Sigma}$ to $B_{1/2} $ and  $\mathscr{N}_{\tau(\epsilon)} \left ( {C}^{1,\beta}(B_{1/2}) \right ) $ is the $\tau(\epsilon)$---neighborhood of ${C}^{1,\beta}(B_{1/2}) $ within $L^\infty(B_{1/2})$. 

Here comes the first main technical difficulty of the proof. To attain such a pivotal result, one must require a sort of ``non-collapsing" property upon the family of laws of degeneracy. Otherwise, if one does not prevent a sequence of  laws of degeneracy $\sigma_j$ to converge to a function $\sigma_\infty$ which vanishes identically on a non-trivial interval $[0,\delta]$, $\delta > 0$, then any function whose Lipschitz norm is less than $\delta$ would belong to the limit set of solutions and \eqref{idea1} couldn't hold true. The concept of non-collapsing moduli of continuity is introduced in Section \ref{sct non-collapsing} and the approximation scheme is the content of Proposition \ref{approx_lemma}.

Once such a result is available, the idea is to iterate it, using supporting hyperplanes of ${C}^{1,\beta}(B_{1/2})$ functions that are close enough to a scaled version of the preceding element of the sequence. To put forward such strategy, though, one has to tackle two intrinsic difficulties. The first one is that $u$ subtracted an affine function solves a family of equations parametrized by a non-compact set of parameters, for which one nonetheless has to extract some compactness property. This is attained by classical PDE methods, inherent of the viscosity theory, see Proposition \ref{teo_lipschitz}. The second, and most challenging difficulty is that these corresponding PDEs are now ruled by a new family of degeneracy laws, which could be collapsing. The main novelty here is a new algorithm for choosing the normalization in each step,  based on a sort of ``shoring-up" technique, which effectively prevents the resulting degeneracy laws from collapsing.

\section{Some preliminaries}

We start off this section by detailing the main assumptions and setting up  some notions and results that will be used in the paper.

For a non-linear operator 
\begin{equation}\label{structure}
	\mathscr{F}(\vec{p}, M) = \sigma \left ( \left |\vec{p} \right | \right )F(M),
\end{equation}
we call $\sigma$ its {\it degeneracy law} and $F$ its {\it diffusion agent}. This latter nomenclature is justified by the ellipticity condition of $F$. This is the content of our first main assumption:

\begin{Assumption}\label{assump_F}\rm
We suppose  $F \colon \mathcal{S}(d)\to\mathbb{R}$ is a $(\lambda,\Lambda)$-uniformly elliptic operator. That is, there exist $0<\lambda\leq\Lambda$ such that
\begin{equation}\label{eq_ellipticity}
	\lambda\left\|N\right\|\,\leq\,F(M\,+\,N)\,-\,F(M)\,\leq\,\Lambda\left\|N\right\|,
\end{equation}
for every $M,\,N\in\mathcal{S}(d)$ with $N\geq 0$. Further, we suppose $F(0)=0$.
\end{Assumption}

As usual, $\mathcal{S}(d)$ stands for the space of $d\times d$ symmetric matrices endowed with its natural order. 

Next we make precise the condition required on the degeneracy law of the operator in \eqref{structure}, which is key for our differentiability result.

\begin{Assumption}\label{assump_dini}
We suppose $\sigma \colon [0,+\infty)\to[0,+\infty)$ is a modulus of continuity for which its inverse, $\sigma^{-1} \colon \sigma\left([0,+\infty)\right)\to[0,+\infty)$ satisfies the Dini condition \eqref{DC}.
\end{Assumption}

Throughout the whole paper we shall also assume
\begin{equation}\label{normalization-sigma}
	\sigma(1) \ge 1,
\end{equation}
which is a mere normalization. 

We recall by modulus of continuity we simply mean an increasing function $\varphi(t)$ defined over an interval of the form $(0, T]$, or over the whole $\mathbb{R}_0^{+} :=(0,+\infty)$, into $\mathbb{R}_0^{+}$ such that  $\lim\limits_{t\to 0} \varphi(t) = 0.$

Given its importance to our main theorem, below we introduce the formal definition of Dini condition:

\begin{definition} \label{def_dini}
A modulus of continuity $\omega$  is said to satisfy the Dini condition if 
\begin{equation}\label{DC}
	\int_0^\tau\frac{\omega (t)}{t}{\bf d}t\,<\,+\infty,
\end{equation}
for some $\tau>0$. 
\end{definition}

The Dini condition plays an important role in mathematical analysis, notably in harmonic analysis and its applications to the theory of PDEs. Recall a function $f\colon X \to Y$ defined over a metric space $(X, d_X)$ into another metric space $(Y, d_Y)$ is said to be Dini continuous if:
$$
	d_Y \left ( f(x_1), f(x_2) \right ) \le \omega_f  \left ( d_X(x_1, x_2) \right ),
$$
for a modulus of continuity  $\omega_f$ satisfying  the Dini condition \eqref{DC}. {For the sake of precision, it is convenient to define the modulus of continuity of $f$ as%
\[
\omega_{f}(t)=\sup_{d_{X}(x_{1},x_{2})\leq t}d_{Y}\left(  f(x_{1}),f(x_{2})\right).
\]}
Obviously any H\"older continuous function $h$ is Dini continuous, as its modulus of continuity is given by $\omega_h(d) = Cd^\alpha$ and
\[
	\int_0^1\frac{\omega_h(t)}{t}{\bf d}t\,=\,C\alpha^{-1},
\]
which is finite. There are however many important examples of  Dini continuous functions that are not H\"older continuous. A classical family of examples is given by:
\begin{equation}\label{Example1}
	\phi_\alpha (t) =  \left ( \frac{1}{1-\ln t} \right )^\alpha,
\end{equation} 
for $  \alpha>1$. Further examples of Dini continuous functions can be crafted through generalized power series.  Let $\left (\gamma_k \right )_{k\in \mathbb{N}} \in c_0$ be a sequence of positive numbers converging to zero and $\left (a_k \right )_{k\in \mathbb{N}} \in \ell_1$ be sequence of positive numbers. Define
$$
	\omega(t) = \sum\limits_{j=1}^\infty a_jt^{\gamma_j}.
$$
Assume for some $  t_\star>0$ the series is convergent  at $t=t_\star$ and that, 
$$
	 \sum\limits_{j=1}^\infty a_j \frac{\tau^{\gamma_j}}{\gamma_j} < \infty,
$$
for some $0<\tau <t_\star$. Then $\omega(t)$, defined over $(0, t_\star]$, verifies the Dini condition. For instance, $\omega(t) =  \sum\limits_{j=1}^\infty \frac{\sqrt[j]{t}}{2^j}$  satisfies the Dini condition. Notice that all examples built up through this method fail to be $\epsilon$--H\"older continuous for all $0< \epsilon<1$.

Similarly, there are a plethora of Dini moduli of continuity, $\tilde{\phi}$ verifying $\phi_{\alpha}(t)  = \text{o}(\tilde{\phi}(t))$ for all $\alpha > 1$, where $\phi_{\alpha}$ are the standard examples from \eqref{Example1}. For instance,
$$
	\tilde{\phi}(t) := \sum\limits_{n=1}^\infty a_n \left ( \frac{1}{1-\ln t} \right )^{1+\frac{1}{n}}, 	
$$
where $a_n = \frac{1}{2^nb_n}$, for $b_n := \int_0^{1 } \frac{1}{t} \left ( \frac{1}{1-\ln t} \right )^{1+\frac{1}{n}} {\bf d}t <+\infty$.

\medskip

Dini condition can also be characterized in terms of the summability of $\omega$ evaluated along geometric sequences. That is, a modulus of continuity $\omega$ satisfies the Dini condition \eqref{DC} if, and only if, 
\begin{equation}\label{eq_dinicharact}
	\sum_{n=1}^\infty\omega (\tau \cdot \theta^n)\,<\,\infty,
\end{equation}
for every $\theta\in(0,1)$. Indeed, by elementar partition argument, there exist points $\xi_i \in [\tau  \theta^{i}, \tau \theta^{i-1}]$ such that:
\begin{equation}\label{partition}
	 \left ( 1-\theta \right )  \sum\limits_{i=1}^\infty   \omega (\xi_i) \le \int_0^\tau\frac{\omega (t)}{t}{\bf d}t \le \frac{1-\theta}{\theta}  \sum\limits_{i=1}^\infty   \omega (\xi_i).
\end{equation}
We resort to the characterization in \eqref{eq_dinicharact} further in our arguments.

Finally, we present our assumption concerning the source term $f$. 

\begin{Assumption}\label{assump_f}\rm
We suppose $f\in L^\infty(B_1)$. 
\end{Assumption}

Our main Theorem \ref{main_theorem1} asserts that if $u$ satisfies an equation 
$$
	\mathscr{F}(D u, D^2u) = f \quad \mbox{in} \quad B_1,
$$
in the viscosity sense, where $\mathscr{F}(\vec{p},M) = \sigma(|\vec{p}|) F(M)$ and assumptions A1--A3 are in order, then $u \in {C}^1_{\text{loc}}(B_1)$ with universal estimates.

Next, on account of completeness, we proceed by recalling the notion of viscosity solution used in the paper. 

\begin{definition}\label{def_viscsol}
Let $G \colon B_1\times\mathbb{R}^d\times\mathcal{S}(d)\to\mathbb{R}$ be a degenerate elliptic operator. We say $u\in{C}(B_1)$ is a viscosity sub-solution to 
\begin{equation}\label{eq_defvisc}
	G(x,Du,D^2u)\,=\,0\hspace{.4in}\mbox{in}\hspace{.1in}B_1
\end{equation}
if for every $x_0\in B_1$ and $\varphi\in{C}^2(B_1)$ such that $u-\varphi$ attains a local maximum in $x_0$, we have
\[
	G(x_0,D\varphi(x_0),D^2\varphi(x_0))\,\leq\,0.
\]
Conversely, we say $u\in{C}(B_1)$ is a viscosity super-solution to \eqref{eq_defvisc} if for every $x_0\in B_1$ and $\varphi\in{C}^2(B_1)$ such that $u-\varphi$ attains a local minimum in $x_0$, we have
\[
	G(x_0,D\varphi(x_0),D^2\varphi(x_0))\,\geq\,0.
\]
In case $u$ is a viscosity sub- and super-solution to \eqref{eq_defvisc}, we say $u$ is a viscosity solution to the equation.
\end{definition}

Throughout the paper, we say that $u\in L^\infty(B_1)$ is a normalized solution if $\|u\|_{L^\infty(B_1)}\leq 1$. For a comprehensive account of the theory of viscosity solutions, we refer the reader to \cite{Crandall-Ishii-Lions1992} and \cite{Caffarelli-Cabre1995}. 
\section{$c_0$--modulators in $\ell_1$} 

In this section we establish the first key technical lemma needed in the proof of Theorem \ref{main_theorem1}. It concerns the existence of $c_0$ sequences, $\left ( c_j \right )_{j\in \mathbb{N}}$, whose rate of convergence to zero is carefully modulated in such a way $\frac{a_j}{c_j} \sim  {a_j}$ in $\ell_1$.  

We will use this lemma as to sponsor the existence of ``shored-up" laws of degeneracy, see Section \ref{sct non-collapsing},  for which the tangential analysis from Section \ref{sec_gta} can be employed, while not destroying the convergence of approximating hyperplanes. Here is its precise statement.

\begin{lemma}  \label{DP} Given any sequence of summable numbers $\left(  a_{j}\right)  _{j\in
\mathbb{N}}\in\ell_{1}$ and $\epsilon,\delta>0,$ there is a sequence $\left(
c_{j}\right)  _{j\in\mathbb{N}}\in c_{0}$, satisfying
\[
\max_{j\in\mathbb{N}}\left\vert c_{j}\right\vert \leq\epsilon^{-1}%
\]
such that%
\[
\left(  b_{j}\right)  _{j\in\mathbb{N}}:=\left(  \frac{a_{j}}{c_{j}}\right)
_{j\in\mathbb{N}}\in\ell_{1}%
\]
and
\[
\epsilon\left(  1-\frac{\delta}{2}\right)  \left\Vert \left(  a_{j}\right)
\right\Vert _{\ell_{1}}\leq\left\Vert \left(  b_{j}\right)  \right\Vert
_{\ell_{1}}\leq\epsilon\left(  1+\delta\right)  \Vert \left(  a_{j}\right) \Vert_{\ell_{1}}.
\]
\end{lemma}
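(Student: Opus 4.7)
The plan is to build $(c_j)$ as a piecewise-constant, slowly decreasing sequence on an increasing sequence of ``blocks'' chosen so that the $\ell_1$ tail of $(a_j)$ past each block decays at a prescribed geometric rate. I will use the ansatz $c_j = \epsilon^{-1} t_j$ with $t_j \in (0,1]$ decreasing to zero; then automatically $\max_j |c_j|\le \epsilon^{-1}$ and $(c_j)\in c_0$, and $b_j = \epsilon a_j/t_j$. Without loss of generality I assume $a_j\ge 0$ and set $S:=\|(a_j)\|_{\ell_1}$.

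The lower bound is essentially free: since $t_j\le 1$, one has
\[
\|(b_j)\|_{\ell_1} \;=\; \epsilon\sum_{j} \frac{a_j}{t_j} \;\ge\; \epsilon\sum_j a_j \;=\;\epsilon\, S,
\]
which is even stronger than the $\epsilon(1-\delta/2)S$ demanded in the statement. So the whole point is to achieve the upper bound $\|(b_j)\|_{\ell_1}\le \epsilon(1+\delta)S$ while simultaneously forcing $t_j\to 0$.

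The block construction goes as follows. Using $(a_j)\in\ell_1$, pick integers $0=N_0<N_1<N_2<\cdots$ such that, writing $T_n:=\sum_{j>n}a_j$, one has $T_{N_\ell}\le \delta S/(\ell\cdot 2^\ell)$ for every $\ell\ge 1$. Define $t_j := 1/k$ for $N_{k-1}<j\le N_k$, and let $\alpha_k:=\sum_{N_{k-1}<j\le N_k}a_j$, so that $\sum_k \alpha_k=S$. Then
\[
\|(b_j)\|_{\ell_1} \;=\; \epsilon\sum_{k\ge 1} k\,\alpha_k \;=\;\epsilon\Bigl(S+\sum_{k\ge 2}(k-1)\alpha_k\Bigr),
\]
and since $\alpha_k\le T_{N_{k-1}}$, the tail choice gives
\[
\sum_{k\ge 2}(k-1)\alpha_k \;\le\; \sum_{\ell\ge 1}\ell\cdot T_{N_\ell}\;\le\; \delta S\sum_{\ell\ge 1}2^{-\ell}\;=\;\delta S,
\]
hence $\|(b_j)\|_{\ell_1}\le \epsilon(1+\delta)S$. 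In particular $(b_j)\in\ell_1$ automatically. Finally, on the $k$-th block $c_j=\epsilon^{-1}/k\to 0$, so $(c_j)\in c_0$, and $\max_j|c_j|=\epsilon^{-1}$ is attained on the first block.

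There is no serious obstacle: the only delicate point is the coupling between the allowed rate of $t_j\searrow 0$ (which governs the $c_0$--membership) and the tail decay of $(a_j)$ (which governs whether multiplying $a_j$ by the factor $1/t_j=k$ destroys summability or blows up the norm). The block construction resolves this by letting us spend one geometric factor on controlling $k\cdot T_{N_{k-1}}$. The particular rate $\delta S/(k\,2^k)$ is a convenient choice; any tail decay summable after multiplication by $k$ would work.
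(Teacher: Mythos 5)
Your proof is correct and uses essentially the same block construction as the paper: $c_j$ is piecewise constant on consecutive blocks whose endpoints are chosen so that the $\ell_1$ tail of $(a_j)$ decays fast enough to absorb the growth of $1/c_j$, the only cosmetic difference being that you let $1/c_j$ grow like $k$ on block $k$ with tail condition $T_{N_\ell}\le \delta S/(\ell 2^\ell)$, whereas the paper takes geometric growth $2^{j-1}$ with tail condition $\delta S/2^{2j-1}$. A minor sharpening in your version: you note the lower bound $\left\Vert(b_j)\right\Vert_{\ell_1}\ge \epsilon S \ge \epsilon(1-\delta/2)S$ is automatic from $c_j\le\epsilon^{-1}$, while the paper recovers only $\epsilon(1-\delta/2)S$ by restricting attention to the first block.
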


\begin{proof}
Let $\delta>0$. Starting off with the hypothesis
\[
\left(  a_{j}\right)  _{j\in\mathbb{N}}\in\ell_{1},
\]
let $n_{1}$ be an integer such that%
\[%
%TCIMACRO{\tsum \limits_{k=n_{1}}^{\infty}}%
%BeginExpansion
{\textstyle\sum\limits_{k=n_{1}}^{\infty}}
%EndExpansion
\left\vert a_{k}\right\vert <\frac{\delta\left\Vert \left(  a_{j}\right)
\right\Vert _{\ell_{1}}}{2}.
\]
In what follows, let $n_{2}>n_{1}$ be such that%
\[%
%TCIMACRO{\tsum \limits_{k=n_{2}}^{\infty}}%
%BeginExpansion
{\textstyle\sum\limits_{k=n_{2}}^{\infty}}
%EndExpansion
\left\vert a_{k}\right\vert <\frac{\delta\left\Vert \left(  a_{j}\right)
\right\Vert _{\ell_{1}}}{2^{3}};
\]
and, in general, let $n_{j}>n_{j-1}$ be such that%
\[%
%TCIMACRO{\tsum \limits_{k=n_{j}}^{\infty}}%
%BeginExpansion
{\textstyle\sum\limits_{k=n_{j}}^{\infty}}
%EndExpansion
\left\vert a_{k}\right\vert <\frac{\delta\left\Vert \left(  a_{j}\right)
\right\Vert _{\ell_{1}}}{2^{2j-1}}%
\]
for all $j.$ Next we construct the sequence of positive numbers $c_{j}$ as
follows:
\begin{align*}
c_{1} &  =\cdots=c_{n_{2}-1}=\frac{1}{\epsilon},\\
c_{n_{2}} &  =\cdots=c_{n_{3}-1}=\frac{1}{2\epsilon},\\
c_{n_{3}} &  =\cdots=c_{n_{4}-1}=\frac{1}{2^{2}\epsilon},\\
&  \quad \quad  \quad  \vdots\\
c_{n_{j}} &  =\cdots=c_{n_{j+1}-1}=\frac{1}{2^{j-1}\epsilon}%
\end{align*}
and so on. Thus, by the very construction, $\left(c_{j}\right)
_{j\in\mathbb{N}}\in c_{0}$ and%
\[
\max_{j\in\mathbb{N}}\left\vert c_{j}\right\vert \leq\epsilon^{-1}.
\]
Next we estimate, for all $j\geq1$:%
\begin{align*}%
%TCIMACRO{\tsum \limits_{k=n_{j}}^{n_{j+1}-1}}%
%BeginExpansion
{\textstyle\sum\limits_{k=n_{j}}^{n_{j+1}-1}}
%EndExpansion
\left\vert \frac{a_{k}}{c_{k}}\right\vert  &  =%
%TCIMACRO{\tsum \limits_{k=n_{j}}^{n_{j+1}-1}}%
%BeginExpansion
{\textstyle\sum\limits_{k=n_{j}}^{n_{j+1}-1}}
%EndExpansion
\left\vert \frac{a_{k}}{1/2^{j-1}\epsilon}\right\vert \\
&  <2^{j-1}\epsilon%
%TCIMACRO{\tsum \limits_{k=n_{j}}^{n_{j+1}-1}}%
%BeginExpansion
{\textstyle\sum\limits_{k=n_{j}}^{n_{j+1}-1}}
%EndExpansion
\left\vert a_{k}\right\vert \\
&  <2^{j-1}\epsilon\frac{\delta\left\Vert \left(  a_{j}\right)  \right\Vert
_{\ell_{1}}}{2^{2j-1}}\\
&  =\frac{\epsilon\delta\left\Vert \left(  a_{j}\right)  \right\Vert
_{\ell_{1}}}{2^{j}}.
\end{align*}
Hence%
\begin{align*}%
%TCIMACRO{\tsum \limits_{k=1}^{\infty}}%
%BeginExpansion
{\textstyle\sum\limits_{k=1}^{\infty}}
%EndExpansion
\left\vert \frac{a_{k}}{c_{k}}\right\vert  &  =%
%TCIMACRO{\tsum \limits_{k=1}^{n_{1}-1}}%
%BeginExpansion
{\textstyle\sum\limits_{k=1}^{n_{1}-1}}
%EndExpansion
\left\vert \frac{a_{k}}{c_{k}}\right\vert +%
%TCIMACRO{\tsum \limits_{k=n_{1}}^{\infty}}%
%BeginExpansion
{\textstyle\sum\limits_{k=n_{1}}^{\infty}}
%EndExpansion
\left\vert \frac{a_{k}}{c_{k}}\right\vert \\
&  < \epsilon\left\Vert \left(a_{j}\right) \right\Vert_{\ell_{1}}+\epsilon\delta\left\Vert \left(
a_{j}\right)  \right\Vert _{\ell_{1}}\\
&  =\epsilon\left(  1+\delta\right)  \left\Vert \left(a_{j}\right)\right\Vert_{\ell_{1}}%
\end{align*}
On the other hand, since%
\[
{\textstyle\sum\limits_{k=1}^{n_{1}-1}}
\left\vert a_{k}\right\vert +%
%TCIMACRO{\tsum \limits_{k=n_{1}}^{\infty}}%
%BeginExpansion
{\textstyle\sum\limits_{k=n_{1}}^{\infty}}
%EndExpansion
\left\vert a_{k}\right\vert =\left\Vert \left(a_{j}\right)\right\Vert_{\ell_{1}}%
\]
and%
\[%
%TCIMACRO{\tsum \limits_{k=n_{1}}^{\infty}}%
%BeginExpansion
{\textstyle\sum\limits_{k=n_{1}}^{\infty}}
%EndExpansion
\left\vert a_{k}\right\vert <\frac{\delta\left\Vert \left(  a_{j}\right)
\right\Vert _{\ell_{1}}}{2}%
\]
we have%
\[%
%TCIMACRO{\tsum \limits_{k=1}^{n_{1}-1}}%
%BeginExpansion
{\textstyle\sum\limits_{k=1}^{n_{1}-1}}
%EndExpansion
\left\vert a_{k}\right\vert >\left\Vert \left(  a_{j}\right)  \right\Vert
_{\ell_{1}}-\frac{\delta\left\Vert \left(  a_{j}\right)  \right\Vert
_{\ell_{1}}}{2}%
\]
Therefore, we obtain%
\begin{align*}%
%TCIMACRO{\tsum \limits_{k=1}^{\infty}}%
%BeginExpansion
{\textstyle\sum\limits_{k=1}^{\infty}}
%EndExpansion
\left\vert \frac{a_{k}}{c_{k}}\right\vert  &  \geq%
%TCIMACRO{\tsum \limits_{k=1}^{n_{1}-1}}%
%BeginExpansion
{\textstyle\sum\limits_{k=1}^{n_{1}-1}}
%EndExpansion
\left\vert \frac{a_{k}}{c_{k}}\right\vert \\
&  =\epsilon%
%TCIMACRO{\tsum \limits_{k=1}^{n_{1}-1}}%
%BeginExpansion
{\textstyle\sum\limits_{k=1}^{n_{1}-1}}
%EndExpansion
\left\vert a_{k}\right\vert \\
&  >\epsilon\left(  \left\Vert \left(  a_{j}\right)  \right\Vert _{\ell_{1}%
}-\frac{\delta\left\Vert \left(  a_{j}\right)  \right\Vert _{\ell_{1}}}%
{2}\right)  \\
&  =\epsilon\left\Vert \left(  a_{j}\right)  \right\Vert _{\ell_{1}}\left(
1-\frac{\delta}{2}\right),
\end{align*}
and the lemma is finally proven. 
\end{proof}

Note, in general, one is not allowed to let $\delta\rightarrow0$ as the sequence $\left(  c_{j}\right)_{j\in\mathbb{N}}$ depends itself upon $\delta$.  Also, it is not hard to verify that the sequence $\left(c_{j}\right)_{j\in\mathbb{N}}$ in Lemma \ref{DP} cannot be taken universal with respect to the $\ell_1$ norm of $\left(a_{j}\right)_{j\in\mathbb{N}}$. This is because for all $\left(  c_{j}\right)  _{j\in\mathbb{N}}\in c_{0}$ it is possible to find a monotone sequence 
$\left(  a_{j}\right)  _{j\in\mathbb{N}}\in\ell_{1}$ with, say $\left\Vert \left(
a_{j}\right)  _{j\in\mathbb{N}}\right\Vert_{\ell_{1}}=1$, such that%
\[%
%TCIMACRO{\tsum \limits_{k=1}^{\infty}}%
%BeginExpansion
{\textstyle\sum\limits_{k=1}^{\infty}}
%EndExpansion
\left\vert \frac{a_{k}}{c_{k}}\right\vert =\infty.
\]
Indeed, for $k \in \mathbb{N}$ let $n_{k}$ be such that%
\[
\left\vert c_{j}\right\vert <\frac{1}{2^{2k+3}}%
\]
for all $j\geq n_{k}.$ Note that we can always choose $n_{1}<n_{2}%
<n_{3}<\cdots$ satisfying%
\[
n_{1}-1<n_{2}-n_{1}<n_{3}-n_{2}<n_{4}-n_{3}<\cdots.
\]
Next define: 
$$
	a_{n_{k}}  = a_{n_{k}+1} = a_{n_{k}+2} = \cdots = a_{n_{k+1}-1} = \frac{1}{2^{k+1}\left(  n_{k+1}-n_{k}\right)  }.
$$
Finally, taking $ a_{1}=\cdots=a_{n_{1}-1}=\frac{1}{2\left(  n_{1}-1\right)  }$, 
we have $a_{j+1}\leq a_{j}$ for all $j$ and $ {\displaystyle\sum\limits_{j=1}^{\infty}} \left\vert a_{j}\right\vert =1.$
Furthermore,%
\[%
%TCIMACRO{\tsum \limits_{k=1}^{\infty}}%
%BeginExpansion
{\textstyle\sum\limits_{k=1}^{\infty}}
%EndExpansion
\left\vert \frac{a_{k}}{c_{k}}\right\vert \geq%
%TCIMACRO{\tsum \limits_{k=1}^{\infty}}%
%BeginExpansion
{\textstyle\sum\limits_{k=1}^{\infty}}
%EndExpansion%
%TCIMACRO{\tsum \limits_{j=n_{k}}^{n_{k+1}-1}}%
%BeginExpansion
{\textstyle\sum\limits_{j=n_{k}}^{n_{k+1}-1}}
%EndExpansion
\left\vert \frac{a_{j}}{c_{j}}\right\vert \geq%
%TCIMACRO{\tsum \limits_{k=1}^{\infty}}%
%BeginExpansion
{\textstyle\sum\limits_{k=1}^{\infty}}
%EndExpansion
\frac{\left(  2^{k+1}\left(  n_{k+1}-n_{k}\right)  \right)  ^{-1}}{\left(
2^{2k+3}\right)  ^{-1}}\left(  n_{k+1}-n_{k}\right)  =\infty.
\]

\medskip

Such an impossibility has implications insofar as the constants dependence on Theorem \ref{main_theorem1} is concerned. In particular one should not expect to control the $C^1$--regularity of solutions solely by $\|\sigma^{-1}\|_{L^1(\lambda^{-1} {\bf d}\lambda)}$, see last Section.

In the next Lemma we tackle the problem of obtaining a universal sequence $\left(c_{j}\right)_{j\in\mathbb{N}}$ for a compact set of $\ell_1$. The proof is an adaptation of the one put forward in Lemma \ref{DP}, so we just sketch it here.

\begin{lemma}\label{DP-Comp}
Let $A$ be a compact subset of $\ell_{1}$ with $0\notin A.$ Given $\epsilon,\delta>0,$ there is a
universal sequence $\left(  c_{j}\right)  _{j\in\mathbb{N}}\in c_{0}$ with
\[
\max_{j\in\mathbb{N}}\left\vert c_{j}\right\vert \leq\epsilon^{-1}%
\]
such that%
\[
\left(  b_{j}\right)  _{j\in\mathbb{N}}:=\left(  \frac{a_{j}}{c_{j}}\right)
_{j\in\mathbb{N}}\in\ell_{1}%
\]
and
\[
\epsilon\left(  1-\frac{\delta}{2}\right)  \left\Vert \left(  a_{j}\right)
\right\Vert _{\ell_{1}}\leq\left\Vert \left(  b_{j}\right)  \right\Vert
_{\ell_{1}}\leq\epsilon\left(  1+\delta\right)  \Vert \left(  a_{j}\right) \Vert_{\ell_{1}},
\]
for all $\left(a_{j}\right)  _{j\in\mathbb{N}}\in A$.
\end{lemma}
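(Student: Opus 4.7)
The plan is to repeat the construction from Lemma \ref{DP} verbatim, but to choose the cut-off indices $n_{j}$ in a way that is uniform over the whole compact set $A$. The only new ingredient is the classical Kolmogorov--Fr\'echet characterization of compactness in $\ell_{1}$: a subset of $\ell_{1}$ is relatively compact if and only if it is bounded and its tails are equi-summable, i.e.\ for every $\eta>0$ there exists $N\in\mathbb{N}$ such that
\[
\sum_{k\geq N}\left\vert a_{k}\right\vert <\eta\quad\text{for every }\left(a_{j}\right)_{j\in\mathbb{N}}\in A.
\]

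First I would use that, because $A$ is compact and $0\notin A$, the continuous functional $\left(a_{j}\right)\mapsto\left\Vert \left(a_{j}\right)\right\Vert_{\ell_{1}}$ attains a strictly positive minimum $m>0$ on $A$. Then, appealing to equi-summability, I would inductively select a strictly increasing sequence $n_{1}<n_{2}<\cdots$ of positive integers such that, for every $j\geq 1$ and every $\left(a_{k}\right)\in A$,
\[
\sum_{k=n_{j}}^{\infty}\left\vert a_{k}\right\vert <\frac{\delta\, m}{2^{2j-1}}\leq\frac{\delta\left\Vert \left(a_{k}\right)\right\Vert _{\ell_{1}}}{2^{2j-1}}.
\]
With these uniform cut-offs in hand, I would define the universal sequence $\left(c_{j}\right)_{j\in\mathbb{N}}$ exactly as in the previous lemma:
\[
c_{1}=\cdots=c_{n_{2}-1}=\frac{1}{\epsilon},\qquad c_{n_{j}}=\cdots=c_{n_{j+1}-1}=\frac{1}{2^{j-1}\epsilon}\quad(j\geq 2),
\]
so that $\left(c_{j}\right)\in c_{0}$ and $\max_{j}\left\vert c_{j}\right\vert \leq\epsilon^{-1}$.

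The two-sided bound on $\left\Vert \left(b_{j}\right)\right\Vert _{\ell_{1}}$ then follows \emph{pointwise} for every $\left(a_{k}\right)\in A$ by transcribing the computation from Lemma \ref{DP}: the $j$-th block contributes at most $2^{j-1}\epsilon\cdot\frac{\delta\left\Vert (a_{k})\right\Vert _{\ell_{1}}}{2^{2j-1}}=\frac{\epsilon\delta\left\Vert (a_{k})\right\Vert _{\ell_{1}}}{2^{j}}$, and summing the geometric series together with the crude estimate $\sum_{k=1}^{n_{1}-1}\left\vert a_{k}/c_{k}\right\vert =\epsilon\sum_{k=1}^{n_{1}-1}\left\vert a_{k}\right\vert \leq\epsilon\left\Vert (a_{k})\right\Vert _{\ell_{1}}$ yields the upper bound $\epsilon(1+\delta)\left\Vert (a_{k})\right\Vert _{\ell_{1}}$; the lower bound $\epsilon(1-\delta/2)\left\Vert (a_{k})\right\Vert _{\ell_{1}}$ comes from keeping only the initial block and using $\sum_{k=1}^{n_{1}-1}\left\vert a_{k}\right\vert >\left\Vert (a_{k})\right\Vert _{\ell_{1}}-\delta\left\Vert (a_{k})\right\Vert _{\ell_{1}}/2$.

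The only real work is the inductive choice of the $n_{j}$; the point I expect to be the crux is recognizing that compactness of $A$ in $\ell_{1}$ — as opposed to mere boundedness — is precisely what supplies the equi-summable tails needed to fix the $n_{j}$ \emph{before} knowing which element $\left(a_{k}\right)\in A$ will be tested, and that the hypothesis $0\notin A$ is what gives the uniform lower bound $m>0$ without which the lower estimate would be vacuous. Once these two facts are in place, the rest is a word-for-word repetition of the argument in Lemma \ref{DP}.
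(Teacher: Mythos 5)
Your proposal is correct and follows essentially the same route as the paper: the paper likewise extracts the uniform tail bound from compactness, sets $r=\inf_{x\in A}\|x\|_{\ell_1}>0$ (your $m$), fixes the cut-offs $n_j$ so that $\sum_{k\geq n_j}|a_k|<\delta r/2^{2j-1}$ uniformly over $A$, and then reuses the sequence $(c_j)$ and the block computation from Lemma \ref{DP} verbatim. Your explicit remark that $\delta m/2^{2j-1}\leq \delta\|(a_k)\|_{\ell_1}/2^{2j-1}$ is exactly the (unstated in the paper) inequality that makes the earlier computation transfer unchanged.
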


\begin{proof}  We start off the proof by noticing that if $A\subset\ell_{1}$ is compact then,
for all $\varepsilon>0$, there is an $n_{\varepsilon}$ such that%
\[%
%TCIMACRO{\tsum \limits_{j=n_{\varepsilon}}^{\infty}}%
%BeginExpansion
{\textstyle\sum\limits_{j=n_{\varepsilon}}^{\infty}}
%EndExpansion
\left\vert a_{j}\right\vert <\varepsilon,
\]
for all $\left(a_{j}\right)  _{j\in\mathbb{N}}\in A$.
%for all $\left(  a_{j}\right)  _{j\in\mathbb{N}}\in A.$ In fact, if this was
%not true, there would exist a certain $\varepsilon_{0}>0$ such that for all
%positive integers $n$, there would be an $\left(  a_{j}^{(n)}\right)
%_{j\in\mathbb{N}}$ $\in K$ satisfying%
%\[%
%%TCIMACRO{\tsum \limits_{j=n}^{\infty}}%
%%BeginExpansion
%{\textstyle\sum\limits_{j=n}^{\infty}}
%%EndExpansion
%\left\vert a_{j}^{(n)}\right\vert \geq\varepsilon_{0}.
%\]
%
%
%Since $\left(  a_{j}^{(k)}\right)  _{j\in\mathbb{N}}\in\ell_{1}$, for all $k$
%there exists $n_{k}$ such that%
%\[
%n>n_{k}\Rightarrow%
%%TCIMACRO{\tsum \limits_{j=n}^{\infty}}%
%%BeginExpansion
%{\textstyle\sum\limits_{j=n}^{\infty}}
%%EndExpansion
%\left\vert a_{j}^{(k)}\right\vert <\frac{\varepsilon_{0}}{2}.
%\]
%Thus, for all $k$, if $n>n_{k}$ we have%
%\[%
%%TCIMACRO{\tsum \limits_{j=n}^{\infty}}%
%%BeginExpansion
%{\textstyle\sum\limits_{j=n}^{\infty}}
%%EndExpansion
%\left\vert a_{j}^{(k)}-a_{j}^{(n)}\right\vert \geq\left\vert
%%TCIMACRO{\tsum \limits_{j=n}^{\infty}}%
%%BeginExpansion
%{\textstyle\sum\limits_{j=n}^{\infty}}
%%EndExpansion
%\left\vert a_{j}^{(k)}\right\vert -%
%%TCIMACRO{\tsum \limits_{j=n}^{\infty}}%
%%BeginExpansion
%{\textstyle\sum\limits_{j=n}^{\infty}}
%%EndExpansion
%\left\vert a_{j}^{(n)}\right\vert \right\vert >\frac{\varepsilon_{0}}{2}%
%\]
%and we conclude that the sequence $\left(  \left(  a_{j}^{(k)}\right)
%_{j\in\mathbb{N}}\right)  _{k\in\mathbb{N}}$ in $K$ does not have a weakly convergent
%subsequence.
Next, let $\delta>0$ and set%
\[
r=\inf_{x\in A}\left\Vert x\right\Vert >0.
\]
Let $n_{1}$ be an integer such that%
\[%
%TCIMACRO{\tsum \limits_{k=n_{1}}^{\infty}}%
%BeginExpansion
{\textstyle\sum\limits_{k=n_{1}}^{\infty}}
%EndExpansion
\left\vert a_{k}\right\vert <\frac{\delta r}{2}%
\]
for all $\left(  a_{k}\right)  \in A.$

In what follows, for $j\in\mathbb{N}$, let $n_{j}>n_{j-1}$ be such that%
\[%
%TCIMACRO{\tsum \limits_{k=n_{j}}^{\infty}}%
%BeginExpansion
{\textstyle\sum\limits_{k=n_{j}}^{\infty}}
%EndExpansion
\left\vert a_{k}\right\vert <\frac{\delta r}{2^{2j-1}}%
\]
for all $\left(  a_{k}\right)  \in A$ and for all $j.$ Next we construct the
sequence of positive numbers $c_{j}$ as follows:
\begin{align*}
c_{1} &  =\cdots=c_{n_{2}-1}=\frac{1}{\epsilon},\\
c_{n_{2}} &  =\cdots=c_{n_{3}-1}=\frac{1}{2\epsilon},\\
c_{n_{3}} &  =\cdots=c_{n_{4}-1}=\frac{1}{2^{2}\epsilon},\\
& \quad \quad \quad  \vdots\\
c_{n_{j}} &  =\cdots=c_{n_{j+1}-1}=\frac{1}{2^{j-1}\epsilon}%
\end{align*}
The proof now follows the reasoning of Lemma \ref{DP}.
\end{proof}

\section{Compactness for perturbed PDEs}\label{sec_holder}

In this section we produce preliminary levels of compactness for the solutions to a variant of \eqref{sigma-eq}, by proving that bounded solutions to a family of equations, that are parametrized by vectors of $\mathbb{R}^d$, are uniformly locally H\"older-continuous. 

This will be attained by means of classical viscosity methods, developed in \cite[Section VII]{Ishii-Lions}, and we will carry all the details over as a courtesy to the readers. We recall a Lemma due to Ishii and Lions: 

\begin{lemma}\label{IshiiLions}
Let $G \colon B_1\times\mathbb{R}^d\times\mathcal{S}(d)\to\mathbb{R}$ be a degenerate elliptic operator. Let $\Omega \subset B_1$, $u \in  C(B_1)$ and $\psi$ be twice continuously differentiable in a neighborhood of $\Omega\times \Omega$. Set 
\[
w(x, y): = u(x) - u(y) \quad \text{for} \quad (x, y) \in \Omega\times \Omega .
\]
If the function $w - \psi$ attains the maximum at $(x_0, y_0) \in \Omega\times \Omega$, then for each $ \varepsilon > 0$, there exist $ X, Y \in {\mathcal S}(d)$ such that
\begin{equation*}\label{viscosity_solution}
G(x_0, D_x \psi(x_0, y_0), X) \leq 0 \leq G(y_0, D_y \psi(x_0, y_0), Y),
\end{equation*}
and the block diagonal matrix with entries X, Y satisfies
\begin{equation*} \label{matrix_inequality}
- \left( \frac{1}{\varepsilon} + \|A \|\right) I \leq 
\left( 
\begin{array}{cc}
X & 0 \\
0 & -Y \\
\end{array}
\right)
\leq  A + \varepsilon A^2, 
\end{equation*}
where $A = D^2 \psi(x_0, y_0)$.
\end{lemma}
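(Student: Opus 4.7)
The plan is to read Lemma \ref{IshiiLions} as an instance of the Crandall--Ishii maximum principle for semicontinuous functions (the \emph{theorem of sums}, Theorem 3.2 of \cite{Crandall-Ishii-Lions1992}) applied to the doubling-variable configuration $w(x,y)=u(x)-u(y)$, combined with the viscosity sub- and super-solution properties of $u$ --- the latter being implicit in the statement, since without them the two $G$-inequalities in the conclusion cannot be extracted.

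First, I would rewrite the extremum hypothesis in the bilateral form required by the theorem of sums, namely
\[
w(x,y)-\psi(x,y)\;=\;u(x)\;+\;\bigl(-u(y)\bigr)\;-\;\psi(x,y),
\]
which has a local maximum at $(x_0,y_0)$. Both $u$ and $-u$ are upper semicontinuous, so the theorem of sums produces, for each $\varepsilon>0$, matrices $X,\widetilde Z\in\mathcal{S}(d)$ with
\[
\bigl(D_x\psi(x_0,y_0),X\bigr)\in\overline{J}^{2,+}u(x_0),\qquad \bigl(D_y\psi(x_0,y_0),\widetilde Z\bigr)\in\overline{J}^{2,+}(-u)(y_0),
\]
together with the block matrix sandwich by $-\!\left(\tfrac{1}{\varepsilon}+\|A\|\right)\!I$ and $A+\varepsilon A^{2}$, where $A=D^{2}\psi(x_0,y_0)$. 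Setting $Y:=-\widetilde Z$, the standard identification
\[
(p,M)\in\overline{J}^{2,+}(-u)(y_0)\;\Longleftrightarrow\;(-p,-M)\in\overline{J}^{2,-}u(y_0)
\]
recasts the second jet membership as a subjet of $u$ at $y_0$, while the block inequality assumes exactly the form displayed in the lemma.

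To close the loop, I would invoke the sub-solution property of $u$ against the test jet at $x_0$ and the super-solution property against the test jet at $y_0$; degenerate ellipticity of $G$ in the matrix slot (together with continuity in the position and gradient slots) legitimises the passage from the semijets $J^{2,\pm}$ to their closures $\overline{J}^{2,\pm}$, thereby yielding the two desired $G$-inequalities.

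The only serious obstacle is the Crandall--Ishii lemma itself: its proof mobilises sup/inf-convolution regularisations, Jensen's perturbed-maximum lemma, and Alexandroff's almost-everywhere twice-differentiability of semiconvex functions. Since this is a black-box tool of viscosity theory, I would simply cite \cite{Crandall-Ishii-Lions1992}; what remains is essentially bookkeeping around the sign change introduced by the substitution $u_2=-u$ and the reorganisation of the block matrix inequality into the stated form.
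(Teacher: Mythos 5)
The paper does not prove this lemma; it simply refers to \cite[Proposition II.3]{Ishii-Lions} and \cite[Theorem 3.2]{Crandall-Ishii-Lions1992}, exactly the sources you cite, so your reduction to the Crandall--Ishii theorem of sums is the standard derivation behind both references. You also correctly observe that the statement tacitly assumes $u$ to be a viscosity solution of $G(x,Du,D^2u)=0$; one further remark is that your jet calculus naturally produces the second inequality with gradient $-D_y\psi(x_0,y_0)$ (the convention of \cite{Ishii-Lions}) rather than $D_y\psi(x_0,y_0)$, a discrepancy that is immaterial here because $G$ depends on the gradient slot only through $\sigma(|\cdot|)$.
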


For a proof of this Lemma, we refer the reader to  \cite[Proposition II.3]{Ishii-Lions}, see also \cite[Theorem 3.2]{Crandall-Ishii-Lions1992}. We are ready to state and prove the H\"older regularity for solutions of the perturbed equation.

\begin{proposition}[H\"older-continuity]\label{teo_lipschitz} 
Let $u \in {C}(B_1)$ be a normalized viscosity solution to 
\begin{equation}\label{eq_pdewxi} 
	\mathscr{F}(Du + \xi, D^2u) \,=\,f \hspace{.4in} \mbox{in} \hspace{.1in} B_1
\end{equation}
where $\xi\in\mathbb{R}^d$ is \emph{arbitrary} and $\mathscr{F}(\vec{p},M) = \sigma(|\vec{p}|)F(M)$. Suppose A\ref{assump_F}, \eqref{normalization-sigma}, and A\ref{assump_f} are in force. Then, $u$ is locally H\"older-continuous in $B_1$. In addition, there exists $C>0$, depending on dimension, ellipticity constants and $\|f\|_\infty$, but not on $\xi\in\mathbb{R}^d$, such that
\[
	 \sup_{\substack{x, y \;\in \;B_{1/2} \\  x \not= y} }\frac{|u(x)\,- \,u(y)|}{|x\,-\, y|^\gamma}\, \leq \,C,
\]
for some $\gamma \in(0,1)$, universal, i.e. depending only on dimension, $\lambda$ and $\Lambda$. 
\end{proposition}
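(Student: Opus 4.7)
The plan is to adapt the Ishii--Lions doubling-of-variables technique \cite[Section VII]{Ishii-Lions} to this perturbed degenerate setting. I would fix $x_0 \in B_{1/2}$ and, for a universal exponent $\gamma \in (0,1)$ and positive parameters $L, M > 0$ to be chosen, introduce
\[
\Phi(x,y) \,:=\, u(x) - u(y) - L|x-y|^\gamma - M\bigl(|x - x_0|^2 + |y - x_0|^2\bigr), \qquad (x,y)\in \overline{B}_1\times \overline{B}_1.
\]
The aim is to show that $\sup \Phi \le 0$ for $L$ universally large (in particular, uniformly in $\xi$), since then evaluation at $y = x_0$ yields $|u(x) - u(x_0)| \le L|x - x_0|^\gamma$, and letting $x_0$ vary over $B_{1/2}$ produces the desired H\"older estimate.

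I would first pick $M$ depending only on $\|u\|_\infty \le 1$ so that $\Phi < 0$ near the boundary of $B_1 \times B_1$, and then argue by contradiction. If $\sup \Phi > 0$, the maximum is attained at some interior $(\bar{x}, \bar{y}) \in B_1 \times B_1$, necessarily with $\bar{x} \neq \bar{y}$ and $|\bar{x} - \bar{y}|^\gamma \lesssim L^{-1}$. Applying Lemma \ref{IshiiLions} to the test function $\psi(x,y) := L|x-y|^\gamma + M(|x-x_0|^2 + |y-x_0|^2)$ then produces matrices $X, Y \in \mathcal{S}(d)$, a slack parameter $\varepsilon > 0$, and gradients
\[
p_1 \,:=\, D_x\psi(\bar{x}, \bar{y}), \qquad -p_2 \,:=\, D_y\psi(\bar{x}, \bar{y}), \qquad p_1, p_2 \,\approx\, L\gamma\,|\bar{x} - \bar{y}|^{\gamma - 1}\, e,
\]
with $e := (\bar{x} - \bar{y})/|\bar{x} - \bar{y}|$, subject to the block matrix inequality together with the viscosity conditions
\[
\sigma(|p_1 + \xi|)\,F(X) \,\le\, f(\bar{x}), \qquad \sigma(|p_2 + \xi|)\,F(Y) \,\ge\, f(\bar{y}).
\]

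Next, I would perform the standard Pucci analysis, exploiting the large negative eigenvalue of $D^2|x - y|^\gamma$ of order $L\gamma(\gamma - 1)|\bar{x} - \bar{y}|^{\gamma - 2}$ in the direction $(e, -e)$, together with the uniform ellipticity of $F$. Sending $\varepsilon \to 0$ in the block matrix inequality yields the key matrix-side estimate
\[
F(X) - F(Y) \,\le\, \mathcal{M}^+_{\lambda,\Lambda}(X - Y) \,\le\, C_1 M - c_0\,L\,\gamma\,(1 - \gamma)\,|\bar{x} - \bar{y}|^{\gamma - 2},
\]
with universal constants $c_0, C_1 > 0$; for $L$ sufficiently large the right-hand side is arbitrarily negative.

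The main obstacle is to leverage the viscosity inequalities against this matrix-side bound uniformly in the arbitrary vector $\xi \in \mathbb{R}^d$. The key observation I would use is that $|p_1 - p_2| \le 2M$, so that $\sigma_1 := \sigma(|p_1 + \xi|)$ and $\sigma_2 := \sigma(|p_2 + \xi|)$ are quantitatively comparable (differing by at most $\sigma(2M)$, by monotonicity of $\sigma$). In the regime $\min(|p_1 + \xi|,\, |p_2 + \xi|) \ge 1$, the normalization $\sigma(1) \ge 1$ together with monotonicity gives $\sigma_i \ge 1$, so the viscosity inequalities reduce to the uniformly elliptic statements $F(X) \le \|f\|_\infty$ and $F(Y) \ge -\|f\|_\infty$; the contradiction with the matrix-side bound then closes as in the classical Ishii--Lions argument. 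In the complementary regime, $\xi$ must essentially cancel the dominant term $L\gamma|\bar{x} - \bar{y}|^{\gamma - 1}e$, forcing both $|p_i + \xi|$ into a bounded region of size $O(M)$; I would then exploit the proximity $\sigma_1 \sim \sigma_2$ and divide through by $\min(\sigma_1, \sigma_2)$, recovering a bound of the same form up to a finite multiplicative factor which is absorbed by picking $L$ correspondingly larger, still depending only on the universal data and not on $\xi$. A universal choice of $\gamma \in (0,1)$ at the end closes the argument.
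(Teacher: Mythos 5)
Your setup — the doubled variables, the test function $L|x-y|^\gamma + M(|x-x_0|^2+|y-x_0|^2)$, the application of Lemma \ref{IshiiLions}, the Pucci estimate giving a large negative term of order $L\gamma(1-\gamma)|\bar x-\bar y|^{\gamma-2}$, and the observation that in the regime $\min(|p_1+\xi|,|p_2+\xi|)\ge 1$ the normalization $\sigma(1)\ge 1$ reduces everything to the uniformly elliptic case — matches the paper's Steps 1--4 (the paper splits on $|\xi|\gtrless C_0$ with $C_0\sim L$, which plays the same role). The gap is in your treatment of the complementary regime. There you propose to divide the viscosity inequalities by $\min(\sigma_1,\sigma_2)$ and absorb the resulting "finite multiplicative factor" by enlarging $L$. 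But $\min(\sigma_1,\sigma_2)=\min\bigl(\sigma(|p_1+\xi|),\sigma(|p_2+\xi|)\bigr)$ has no universal positive lower bound in this regime: since $\xi$ is arbitrary it can cancel the dominant term $L\gamma|\bar x-\bar y|^{\gamma-1}e$ exactly, in which case $\sigma_i$ can be arbitrarily small or even zero, and the viscosity inequality $\sigma_1 F(X)\le f(\bar x)$ then carries no information about $X$ at all. Moreover the factor depends on the location of the maximum point, hence on $u$, $\xi$ and on $L$ itself, so it cannot be "absorbed by picking $L$ correspondingly larger" without circularity; the resulting constant would not be universal, which is precisely what the proposition requires. (A secondary point: comparability of $\sigma_1$ and $\sigma_2$ via $|\sigma(a)-\sigma(b)|\le\sigma(|a-b|)$ needs subadditivity of $\sigma$, which is not assumed — but even granting it, closeness of the two values does not bound either one from below.)

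The paper closes this case by a different mechanism: when $|\xi|\le C_0$ it observes that the operator $G(x,p,M):=\sigma(|\xi+p|)F(M)-f(x)$ is uniformly elliptic whenever $|p|>5C_0$ (since then $|\xi+p|\ge 4C_0\ge 1$ and $\sigma(|\xi+p|)\ge\sigma(1)\ge 1$), and then invokes the known H\"older estimates for equations that are elliptic only where the gradient is large (\cite{IS2}, see also \cite{D}). Those results rest on measure-theoretic ingredients beyond the doubling-of-variables technique, and some input of this type is needed: the Ishii--Lions argument alone cannot rule out the configuration where $\xi$ neutralizes the test-function gradient. To repair your proof you would either have to import such a result for the bounded-$\xi$ case, as the paper does, or supply a genuinely new argument for that regime.
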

\begin{proof} As commented, the proof follows standard methods in viscosity theory. We will carry all details for completeness. 

Fix a $0< \chi < 1$ and define $\omega \colon \mathbb{R}^+\to\mathbb{R}^+$ be defined as $\omega(t):=t^\chi$. For some $0<r\ll 1$ to be determined further in the proof, we consider the quantity
\[
	\mathcal{L}\,:= \,\sup_{x, y \in B_r}(u(x) \,- \,u(y)-L_1 \omega(|x-y|) -L_2(|x\, -\,x_0|^2\,+\, |y\,-\,x_0|^2)),
\]
defined for every $x_0 \in B_{r/2}$. If we prove that  $\mathcal{L}\leq0$ for some appropriate choices of $L_1,\,L_2>0$, we establish the result. As it is usual when resorting to this class of arguments, we reason through a contradiction argument. That is to say the following:  suppose for every $L_1>0$ and $L_2>0$, there is $x_0\in B_{r/2}$ for which $\mathcal{L}>0$. In what follows, we split the proof in several steps.

\bigskip

\noindent{\bf Step 1 - }Consider $\psi,\, \phi \colon \overline{B_r} \times \overline{B_r} \rightarrow \mathbb{R}$, defined by
\[
\psi(x,y):= L_1\omega(|x-y|) + L_2(|x -x_0|^2+ |y -x_0|^2)
\]
and 
\[
\phi(x,y):=u(x) - u(y) - \psi(x,y).
\]
Let $(\bar{x}, \bar{y}) \in \overline{B_r} \times \overline{B_r}$ be a maximum point for $\phi$. Thus
\[
	\phi(\bar{x}, \bar{y}) \,=\, \mathcal{L}\,>\,0.
\]
We therefore conclude
\[
	\psi(\bar{x}, \bar{y})\,<\, u(\bar{x}) \,-\, u(\bar{y})\, \leq\, \osc_{B_1} u\,\leq \,2.
\]
It follows that
\[
	L_1\omega(|\bar{x} \,- \,\bar{y}|) \,+\, L_2(|\bar{x} \,-\, x_0|^2 \,+\, |\bar{y}\, -\, x_0|^2) \,\leq \,2.
\]
As usual, at this point we choose $L_2$ as to ensure that $\bar{x}$ and $\bar{y}$ are interior points. In fact, if
\[
	L_2\,:=\, \left(\frac{4\sqrt{2}}{r}\right)^2
\]
we get
\[
	|\bar{x}\,- \,x_0| \,\leq \,\frac{r}{4} \hspace{.2in} \mbox{and} \hspace{.2in} |\bar{y}\, -\, x_0| \,\leq\, \frac{r}{4},
\]
hence concluding $\bar{x},\,\bar{y}\in B_r$. Finally, it is straightforward to notice that $ \bar{x} \neq \bar{y}$; otherwise, we would have $\mathcal{L}\leq 0$ trivially.

\bigskip

\noindent{\bf Step 2 - } At this point, we resort to the  the Ishii-Lions Lemma, stated in Lemma \ref{IshiiLions}. We proceed by computing $D_x\psi$ and $D_y\psi$ at $(\bar{x}, \bar{y})$. We find
\[
 D_x \psi( \bar{x},  \bar{y})= L_1 {\omega}^{\prime}(| \bar{x}-  \bar{y}|)| \bar{x} -  \bar{y}|^{-1}( \bar{x} -  \bar{y}) + 2 L_2( \bar{x} - x_0),
\]
and
\[
- D_y \psi( \bar{x},  \bar{y})= L_1 {\omega}^{\prime}(| \bar{x}-  \bar{y}|)| \bar{x} -  \bar{y}|^{-1}( \bar{x} -  \bar{y}) - 2 L_2( \bar{x} - x_0).
\]
For ease of presentation, we introduce the following notation:
\[
{\xi}_{ \bar{x}}:= D_x \psi( \bar{x},  \bar{y})\quad \text{and}\quad
{\xi}_{ \bar{y}}:= D_y \psi( \bar{x},  \bar{y}).
\]

From Lemma \ref{IshiiLions} we learn that for every $\varepsilon>0$, there are matrices $X,\,Y\in\mathcal{S}(d)$ satisfying the viscosity inequalities
\begin{equation}\label{eq1_IL}
\sigma(|{\xi}_{ \bar{x}} + \xi|) F(X) - f(\bar{x})\leq 0 \leq \sigma(|{\xi}_{ \bar{y}} + \xi|) F(Y) - f(\bar{y}).
\end{equation}
In addition, 
\begin{equation} \label{eq2_IL}
\left( 
\begin{array}{cc}
X & 0 \\
0 & -Y \\
\end{array}
\right)\,
\leq \,
\left( 
\begin{array}{cc}
Z & -Z \\
-Z & Z \\
\end{array}
\right)\,
+\,
2L_2I \,+ \,\varepsilon A^2,
\end{equation}
where $A:= D^2 \psi (\bar{x}, \bar{y})$ and
\[
Z:= L_1 {\omega}^{\prime \prime}(|\bar{x} - \bar{y}|) \dfrac{(\bar{x} - \bar{y})\otimes(\bar{x} - \bar{y}) }{|\bar{x} - \bar{y}|^2} + L_1\dfrac{{\omega}^{\prime}(|\bar{x} - \bar{y}|)}{|\bar{x} - \bar{y}|}\left( I - \dfrac{(\bar{x} - \bar{y})\otimes(\bar{x} - \bar{y}) }{|\bar{x} - \bar{y}|^2}\right).
\]

\bigskip

\noindent{\bf Step 3 - }Next we apply the matrix inequality \eqref{eq2_IL} to suitable vectors to recover information on the eigenvalues of $X\,-\,Y$. Let $v \in \mathbb{S}^{d-1}$ and consider first $(v, v)\in\mathbb{R}^{2d}$; we obtain
\[
\langle(X-Y)v, v \rangle \leq (4L_2 + 2\varepsilon \eta),
\]
where $\eta := \|A^2\|$. It is consequential that all eigenvalues of $X-Y$ are bellow $4L_2 + 2\varepsilon \eta$. Furthermore, we apply \eqref{eq2_IL} to vectors of the form $(z, -z)\in\mathbb{R}^{2d}$, where
\[
	z\,:=\,\frac{\bar{x}\,-\,\bar{y}}{|\bar{x}\,-\,\bar{y}|};
\]
we then get
\begin{equation}\label{eq3_IL}
\langle(X-Y)z, z \rangle \leq 4 L_1  {\omega}^{\prime \prime}(|\bar{x} - \bar{y}|) + (4L_2 + 2\varepsilon \eta)|z|^2.
\end{equation}
From the definition of $\omega$, we learn it is twice differentiable, $\omega >0$ and ${\omega}^{\prime \prime} <0$. It then follows from \eqref{eq3_IL} that at least one eigenvalue of $X-Y$ is bellow $-4L_1 + 4L_2 + 2\varepsilon \eta$. Observe that this quantity will be negative for $L_1$ sufficiently large. In the sequel, we  compute 
\[
	\mathcal{M}_{\lambda, \Lambda}^{-}(X\,-\, Y)\, \geq \,4 \lambda L_1 \,-\, (\lambda \,+\, (d\,-\,1)\Lambda)(4L_2 \,+ \, 2\varepsilon \eta);
\]
this inequality builds upon the definition of ellipticity and \eqref{eq1_IL} to produce

\begin{equation}\label{eq7_IL}
	4\lambda L_1\, \leq\, (\lambda \,+\, (d\,-\,1)\Lambda)(4L_2\, +\,  2\varepsilon \eta)\, +\, \dfrac{f(\bar{x})}{\sigma(|{\xi}_{\bar{x}}\, +\,\xi|)}\, -\, \frac{f(\bar{y})}{\sigma(|{\xi}_{\bar{y}}\, +\, \xi|)}.
\end{equation}

\bigskip

\noindent{\bf Step 4 - }At this point we examine two different cases. We start by considering $|\xi|>C_0$, where $C_0>0$ is yet to be determined. Estimate the norm of $\xi_{\bar{x}}$ as follows:
\begin{equation}\label{pokemon}
	|\xi_{\bar{x}}|\,\leq\, L_1|w'(|\bar{x}-\bar{y}|)|\,+\,2L_2\,\leq\,cL_1,
\end{equation}
for some constant $c>0$, universal. We choose next $C_0:=50cL_1 > 1.03$, for $L_1$ to be fixed later. Since $|\xi_{\bar{x}}|<cL_1$ and $|\xi|>50cL_1$ we get
\[
	|\xi\,+\xi_{\bar{x}}|\,\geq\, C_0\,-\,\frac{C_0}{50}\,=\,\frac{49}{50}C_0;
\]
a similar reasoning yields
\[
	|\xi\,+\xi_{\bar{y}}|\,\geq\, C_0\,-\,\frac{C_0}{50}\,=\,\frac{49}{50}C_0.
\]
The former inequalities, combined with the fact that $\sigma$ is nondecreasing, yield
\begin{equation}\label{eq10_IL}
\dfrac{f(\bar{x})}{\sigma(|\xi_{\bar{x}} + \xi|)} \leq \displaystyle \dfrac{\|f\|_{L^{\infty}(B_1)}}{\sigma\left(\dfrac{49C_0}{50}\right)}\leq \|f\|_{L^{\infty}(B_1)}
\end{equation}
and
\begin{equation}\label{eq11_IL}
\dfrac{- f(\bar{y})}{\sigma(|\xi_{\bar{y}} + \xi|)} \leq \displaystyle \dfrac{\|f\|_{L^{\infty}(B_1)}}{\sigma\left(\dfrac{49C_0}{50}\right)}\leq \|f\|_{L^{\infty}(B_1)}.
\end{equation}

\bigskip

\noindent On their turn, inequalities \eqref{eq10_IL} and \eqref{eq11_IL} combined with \eqref{eq7_IL} produce

\begin{equation}\label{eq12_IL}
4\lambda L_1 \leq (\lambda  + (d -1) \Lambda)(4L_2 + 2\varepsilon\eta)  + 2 \|f\|_{L^{\infty}(B_1)}.
\end{equation}
By choosing $L_1=L_1(\lambda, \Lambda, d,L_2,r)\gg 1$ sufficiently large, we obtain a contradiction. Consequential on this contradiction is the fact that $\mathcal{L}\leq 0$; hence, we obtain local $\chi$-H\"older continuity of the solutions in the case $|\xi|>C_0$.

\bigskip

\noindent{\bf Step 5 - }Consider now the complementary case; i.e., let $\left|\xi\right|\,\leq\,C_0$, where $C_0  = 50  cL_1$ was chosen in the previous step.
Define the operator
\[
	G(x,p,M)\,:=\ \sigma(|\xi\,+\,p|) F(M)\,-\,f(x).
\]
It follows that $G(x,p,M)$ is uniformly elliptic provided $|p|> 5C_0$. By using previous regularity results (see, for instance, \cite{D} or \cite{IS2}), we derive H\"older-continuity of the solutions. Gathered with the former step, this fact completes the proof of the theorem.
\end{proof}

%\begin{Remark} \label{RemarkLip} \normalfont
%In Proposition \ref{teo_lipschitz} we avoid the dependence on $\xi\in\mathbb{R}^d$ by splitting the space into $B_{C_0}$ and $\mathbb{R}^d\setminus B_{C_0}$. In the latter case --- corresponding to Step 4 --- the proof yields Lipschitz-regularity of solutions.  
%\end{Remark}

%\begin{Remark} \label{RemarkLip2} \normalfont
%A recursive interaction of Proposition \ref{teo_lipschitz}, with $\xi = 0$, similar to the one carried out  for instance in \cite{TeixARMA14, TeixMAAN14}, yields improved estimates up to local $C^{0,1^{-}}$. Such a result holds under no further assumption on the degeneracy law $\sigma$. To attain differentiability, though further condition on the degeneracy law is required and this is the ultimate goal of Theorem \ref{main_theorem1}
%\end{Remark}

Once compactness for  solutions of the $\xi$-perturbed equation is available, we approximate solutions to \eqref{sigma-eq} and \eqref{eq_pdewxi} by solutions to $F=0$. This is our next goal; however before we advance, we need first to introduce a new concept, which is the content of next section.

\section{Non-collapsing moduli of continuity} \label{sct non-collapsing}

In this section we formalize the notion of a family of non-collapsing moduli
of continuity. Hereafter we collect all intervals of the form $(0, T]$:
$$
	\mathscr{I} := \left \{ (0, T] ~ \big | ~ 0< T < \infty \right \} \cup  \left \{ \mathbb{R}_0^+ \right \} .
$$

\begin{definition}[Non-collapsing]\label{def_ncoll}
A set $\Gamma$ of moduli of continuity defined over an
interval $I \in \mathscr{I}$ is said to be non-collapsing if for all sequences $\left(  f_{n}\right)
_{n\in\mathbb{N}}\subset\Gamma,$ and all sequences of scalars $\left(
a_{n}\right)  _{n\in\mathbb{N}}\subset I$, we have%
\[
	f_{n}(a_{n})\rightarrow0 \quad \text{ implies } \quad   a_{n}\rightarrow0.
\]
\end{definition}

The former definition plays an important role in the tangential analysis developed in the paper. In fact, when one tries to connect the prospective regularity theory for $\sigma(|Du|) F(D^2u)=f$ with the one available for $F(D^2h)=0$, we aim at profiting from a  sort of \emph{cancellation} effect, to be understood in the viscosity sense. This is only achievable, however, if one carefully modulates the rate in which $\sigma(t)$ approaches zero, as $t\to 0$. Put differently, we must ensure the degeneracy law is not, itself, degenerate. 

\begin{definition} 
We define the collapsing measure of a family of moduli of continuity $\Gamma$ defined over an
interval $I \in \mathscr{I}$ as%
\[
	\mu\left(  \Gamma\right)  :=\sup\left\{  s\in I ~ \Big  |~  \inf\limits_{\sigma
	\in\Gamma}\sigma(s)=0\right\}  .
\]
\end{definition}
For obvious reasons all finite sets of moduli of continuity are non-collapsing,
and the interesting environment are infinite sets; for this reason in this
section all families of moduli of continuity shall be not finite.

It is not difficult to observe that the measure defined above characterizes
non-collapsing sets as follows:

\begin{proposition} \label{equiv}
Let $\Gamma$ be a family of moduli of continuity defined over an interval
$I \in \mathscr{I}$. The following are equivalent:

\begin{enumerate}
\item $\Gamma$ is non-collapsing.

\item For all sequences $(f_{n})_{n\in\mathbb{N}}\subset\Gamma$ and
$a\in I \setminus \{0\}$, $\liminf\limits_{n\rightarrow\infty}f_{n}(a)>0$.

\item $\mu\left(  \Gamma\right)  =0$.
\end{enumerate}
\end{proposition}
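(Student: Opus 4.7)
The plan is to establish the equivalence via the cyclic chain $(1) \Rightarrow (2) \Rightarrow (3) \Rightarrow (1)$. At its core, this is a bookkeeping exercise: the only non-trivial analytic ingredients are the monotonicity of moduli of continuity (which lets us bound $f_n$ at a small argument by its value at a larger one) and some care in manipulating the defining supremum of $\mu(\Gamma)$ together with subsequence extraction. No compactness hypothesis or further structural assumption on $\Gamma$ is required.

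For $(1) \Rightarrow (2)$, I would argue by contradiction. Given $(f_n) \subset \Gamma$ and some $a \in I \setminus \{0\}$ with $\liminf_n f_n(a) = 0$, one may pass to a subsequence along which $f_{n_k}(a) \to 0$ and apply the non-collapsing hypothesis to the constant sequence $a_{n_k} \equiv a$; this would force $a \to 0$, contradicting $a \neq 0$.

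For $(2) \Rightarrow (3)$, suppose $\mu(\Gamma) > 0$. Fix any $\epsilon \in (0,\mu(\Gamma))$; by the defining property of the supremum there exists $s > \epsilon$ in the set $\{s \in I : \inf_{\sigma \in \Gamma} \sigma(s) = 0\}$. A minimizing sequence $(f_n) \subset \Gamma$ with $f_n(s) \to 0$ then contradicts (2) at the positive point $a = s$. Conversely, for $(3) \Rightarrow (1)$, take $(f_n) \subset \Gamma$ and $(a_n) \subset I$ with $f_n(a_n) \to 0$ and suppose, toward a contradiction, that $a_n \not\to 0$. Then for some $\delta > 0$ there is a subsequence with $a_{n_k} \geq \delta$; monotonicity of each $f_{n_k}$ gives
\[
0 \;\leq\; f_{n_k}(\delta) \;\leq\; f_{n_k}(a_{n_k}) \;\longrightarrow\; 0,
\]
so $\inf_{\sigma \in \Gamma} \sigma(\delta) = 0$, placing $\delta$ in the set that defines $\mu(\Gamma)$ and forcing $\mu(\Gamma) \geq \delta > 0$ --- contradicting (3).

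The only genuine point of care is in $(2) \Rightarrow (3)$, where one must resist evaluating $\inf_\sigma \sigma$ exactly at the (possibly non-attained) supremum $\mu(\Gamma)$: the remedy is simply to step slightly inside and use the approximation property of $\sup$ to produce a strictly positive $s$ inside the defining set. Beyond this minor subtlety, each implication reduces to a contradiction obtained by directly unpacking the relevant definition together with monotonicity of the moduli.
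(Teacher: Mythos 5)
Your proof is correct and follows essentially the same approach as the paper: the $(1)\Rightarrow(2)$ step via subsequence extraction and the monotonicity argument used to return to $(1)$ are identical in substance. The only organizational difference is that you spell out the $(2)\Leftrightarrow(3)$ equivalence via the supremum's approximation property, which the paper declares immediate, and you close the chain cyclically through $(3)$ rather than proving $(2)\Rightarrow(1)$ directly.
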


\begin{proof}
It is immediate that $(2)$ and $(3)$ are equivalent.

\noindent $(1)\Rightarrow(2).$ Suppose, seeking a contradiction, there was a sequence $\left(  f_{n}\right)  $ and a
certain $a>0$ such that%
\[
	\liminf\limits_{n\rightarrow\infty}f_{n}(a)=0.
\]
Hence, there is a subsequence $\left(  f_{n_{k}}\right)  $ such that%
\[
	f_{n_{k}}(a)\rightarrow0,
\]
and, since $\Gamma$ is non-collapsing, we conclude that $a=0$, which is a contradiction.

\medskip

\noindent $(2)\Rightarrow(1)$ Let us suppose, for the sake of contradiction, that
$\left(  1\right)  $ is not valid. Thus, there would exist $(f_{n}%
)_{n\in\mathbb{N}}\subset\Gamma$ and $(a_{n})_{n\in\mathbb{N}}\subset I$,
with $f_{n}(a_{n})\rightarrow0$ and $a_{n}\nrightarrow0$. So, up to a
subsequence, there exists a certain $a_{0}>0$ such that
\[
a_{n}\geq a_{0}>0.
\]
Since all the functions $f_{n}$ are non-decreasing, we would have%
\[
f_{n}\left(  a_{n}\right)  \geq f_{n}\left(  a_{0}\right)  >0
\]
and, recalling that $f_{n}(a_{n})\rightarrow0,$ we would have $f_{n}\left(
a_{0}\right)  \rightarrow0,$ which contradicts $(2)$.
\end{proof}

Observe that $\mu$ behaves as a kind of \textquotedblleft measure of
collapse\textquotedblright: for non-collapsing sets $\Gamma$, we have
$\mu(\Gamma)=0$ and for collapsing sets $\Gamma$ we have $\mu(\Gamma) >0$.
The higher the value of $\mu(\Gamma)$ the more degenerate the family $\Gamma$, otherwise refereed as 
\textquotedblleft more collapsing\textquotedblright.

Notice that
\[
\mu\left(  \Gamma_{1}\cup\Gamma_{2}\right)  =\max\left\{  \mu\left(
\Gamma_{1}\right)  ,\mu\left(  \Gamma_{2}\right)  \right\}  .
\]
However, for infinitely many unions it is possible that $\mu\left(  \Gamma
_{n}\right)  =0$ for all $n\in\mathbb{N}$ and
\[
\mu\left(  \bigcup\limits_{n=1}^{\infty}\Gamma_{n}\right)  = \left | I \right |,
\]
where $\left | I \right |$ stands for the total length of the interval $I$. For instance, say on $(0,1]$, take  $\sigma_{j} := t^j$ and define%
\[
\Gamma_{k}= \left \{\sigma_{1}, \cdots,\sigma_{k} \right \}
\]
for all $k\geq1$, we have $\mu\left(  \Gamma_{n}\right)  =0$; however 
\[
\mu\left(  \bigcup\limits_{n=1}^{\infty}\Gamma_{n}\right)  =1.
\]
A plenty of examples of non-collapsing sets of moduli of continuity can be
generated by the next propositions:

\begin{proposition} \label{equicontinuity}
If $\Gamma$ is a family of moduli of continuity $\sigma \colon I  \subset \mathbb{R}_0^+ \to
\mathbb{R}_0^+ $ such that set $\Gamma^{-1}:=\left\{  \sigma^{-1} ~ \big | ~ \sigma\in\Gamma\right\}$
is equicontinuous, then $\Gamma$ is non-collapsing.
\end{proposition}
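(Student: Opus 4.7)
The strategy is to argue directly from the definition of non-collapsing, turning the hypothesis on $\Gamma^{-1}$ into a uniform control on how fast each $\sigma \in \Gamma$ can approach $0$. In particular, equicontinuity of $\Gamma^{-1}$ at the origin---combined with the fact that each $\sigma^{-1}$ is itself a modulus of continuity, so $\sigma^{-1}(s) \to 0$ as $s\to 0^+$---yields that for every $\epsilon>0$ there exists $\delta=\delta(\epsilon)>0$ such that
\[
    s < \delta \quad \Longrightarrow \quad \sigma^{-1}(s) < \epsilon \qquad \text{for every } \sigma \in \Gamma.
\]
This is the key quantitative content extracted from the hypothesis.

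Once this uniform bound is in place, the verification is immediate. I would fix an arbitrary sequence $(f_n)_{n\in\mathbb{N}} \subset \Gamma$ and $(a_n)_{n\in\mathbb{N}} \subset I$ with $f_n(a_n)\to 0$, take $\epsilon>0$ arbitrary, and produce $\delta>0$ as above. For $n$ large enough we have $f_n(a_n)<\delta$, hence
\[
    a_n \;=\; f_n^{-1}\bigl(f_n(a_n)\bigr) \;<\; \epsilon,
\]
so $a_n\to 0$, which is exactly the definition of non-collapsing. Alternatively, one could route the argument through Proposition \ref{equiv}: given any $a\in I\setminus\{0\}$, the monotonicity of each $f_n$ and the uniform bound $\sigma^{-1}(s)<a/2$ for $s<\delta(a/2)$ imply $f_n(a)\ge \delta(a/2)$ for every $n$, hence $\liminf_n f_n(a)>0$.

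The only delicate point---and the place where care is needed---is the precise interpretation of the equicontinuity of $\Gamma^{-1}$, since the members of $\Gamma$ can have different ranges and thus $\sigma^{-1}$ can have different domains. I would handle this by extending each $\sigma^{-1}$ by $0$ at the origin (a legitimate extension, because $\sigma^{-1}$ is increasing with $\sigma^{-1}(s)\to 0$ as $s \to 0^+$) and reading equicontinuity as equicontinuity at the common point $0$ of these extensions. With this convention in place the argument above is clean, and no further compactness or diagonal extraction is required.
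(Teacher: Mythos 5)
Your argument is correct and follows essentially the same route as the paper: both extract from the equicontinuity of $\Gamma^{-1}$ a uniform statement of the form ``$s<\delta \Rightarrow \sigma^{-1}(s)<\epsilon$ for all $\sigma\in\Gamma$'' and then read it backwards as ``$\sigma(x)<\delta \Rightarrow x<\epsilon$,'' which gives non-collapsing directly (the paper phrases this by applying equicontinuity at points $\sigma(x),\sigma(y)$ and letting $y\to 0$, whereas you invoke equicontinuity at the origin from the start, which is the only place it is actually used). Your remark about the varying domains of the $\sigma^{-1}$'s and the convention of reading equicontinuity at the common base point $0$ is a sensible clarification that the paper leaves implicit.
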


\begin{proof}
If $\Gamma^{-1}$ is equicontinuous, given $\varepsilon>0$, there is a
$\delta>0$ such that%
\[
\left\vert x-y\right\vert <\delta\Rightarrow\sup_{\sigma^{-1}\in\Gamma^{-1}%
}\left\vert \sigma^{-1}\left(  x\right)  -\sigma^{-1}\left(  y\right)
\right\vert <\varepsilon.
\]
for all $x,y\in I.$ Thus%
\[
\left\vert \sigma\left(  x\right)  -\sigma\left(  y\right)  \right\vert
<\delta\Rightarrow\sup_{\sigma^{-1}\in\Gamma^{-1}}\left\vert \sigma
^{-1}\left(  \sigma\left(  x\right)  \right)  -\sigma^{-1}\left(
\sigma\left(  y\right)  \right)  \right\vert <\varepsilon,
\]
for all $x,y\in I$ and all $\sigma\in\Gamma$. Letting $y \to 0,$%
\[
\sigma\left(  x\right)  <\delta\Rightarrow x<\varepsilon
\]
for all $x\in I$ and all $\sigma\in\Gamma,$ i.e.,%
\[
x > \varepsilon \implies \inf_{\sigma\in\Gamma}\sigma\left(  x\right)
\geq\delta.
\]
Hence, $\Gamma$ is non-collapsing.
\end{proof}

\begin{proposition}\label{Dini implies non-collapsing} Let $\Gamma$ be a family of moduli of continuity and assume
$$
	S:=\sup\limits_{\omega \in \Gamma} \int_0^\tau \frac{\omega^{-1}(t)}{t} {\bf d} t < \infty,
$$
for some $\tau>0$, then $\mu\left (\Gamma \right ) = 0$.
\end{proposition}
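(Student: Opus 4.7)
The plan is to argue by contradiction and derive a lower bound on the integral that blows up, contradicting the uniform Dini-type bound $S<\infty$.

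First, I would assume, for the sake of contradiction, that $\mu(\Gamma)>0$. By the definition of the collapsing measure, this yields some $a>0$ (which we may take with $a<\tau$) and a sequence $\left(\omega_n\right)_{n\in\mathbb{N}}\subset\Gamma$ such that $\omega_n(a)\to 0$. Discarding finitely many terms, we may assume $\omega_n(a)<\tau$ for every $n$.

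Next I would exploit monotonicity of the inverses. Since each $\omega_n$ is a modulus of continuity (hence non-decreasing with $\omega_n(0^+)=0$), its inverse $\omega_n^{-1}$ is also non-decreasing on its domain. In particular, for every $t\in[\omega_n(a),\tau]$ one has $\omega_n^{-1}(t)\geq\omega_n^{-1}(\omega_n(a))=a$. Integrating this pointwise bound yields
\[
\int_0^\tau\frac{\omega_n^{-1}(t)}{t}\,{\bf d}t\;\geq\;\int_{\omega_n(a)}^\tau\frac{\omega_n^{-1}(t)}{t}\,{\bf d}t\;\geq\;a\int_{\omega_n(a)}^\tau\frac{1}{t}\,{\bf d}t\;=\;a\,\ln\!\left(\frac{\tau}{\omega_n(a)}\right).
\]
Since $\omega_n(a)\to 0$, the right-hand side diverges as $n\to\infty$, contradicting the uniform bound $S<\infty$. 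Hence $\mu(\Gamma)=0$, and by Proposition \ref{equiv} the family $\Gamma$ is non-collapsing.

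I do not anticipate a serious obstacle: the only subtlety is ensuring that the inverse $\omega_n^{-1}$ is well defined on a suitable interval containing $[\omega_n(a),\tau]$, which follows from the definition of modulus of continuity together with the assumed finiteness of $\int_0^\tau\omega_n^{-1}(t)/t\,{\bf d}t$ (that integral already presupposes $\omega_n^{-1}$ is defined on $(0,\tau]$). Should $\omega_n$ have flat segments, one replaces $\omega_n^{-1}$ by its generalized inverse $\omega_n^{-1}(t):=\inf\{s:\omega_n(s)\geq t\}$, for which the monotonicity and the bound $\omega_n^{-1}(t)\geq a$ on $[\omega_n(a),\tau]$ remain valid, so the estimate above is unaffected.
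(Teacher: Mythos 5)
Your proposal is correct and follows essentially the same approach as the paper: argue by contradiction from $\omega_n(a)\to 0$ with $a>0$, use monotonicity of $\omega_n^{-1}$ to get the lower bound $\omega_n^{-1}(t)\geq a$ on $[\omega_n(a),\tau]$, and integrate to force the Dini integral to diverge. Your remark on generalized inverses is a careful addition but does not change the route.
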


\begin{proof} From Proposition \ref{equiv}, it suffices to show  $\omega_n \in \Gamma, ~ \omega_n (a) \to 0 \implies a= 0.$
Hence, let us suppose, seeking a contradiction, there exist a sequence $\omega_n \in \Gamma$ and a positive $a>0$ such that
$b_n := \omega_n(a) \to 0.$ We estimate
$$
	\displaystyle S\ge \displaystyle \int_0^\tau \frac{\omega_n^{-1}(t)}{t} {\bf d} t \ge \displaystyle \int_{b_n}^\tau \frac{\omega_n^{-1}(t)}{t} {\bf d} t \ge a \displaystyle \int_{b_n}^\tau \frac{1}{t} {\bf d} t \longrightarrow  +\infty,
$$
as $n \to 0$. We reach a contradiction, and Proposition \ref{Dini implies non-collapsing} is proven.
\end{proof}

Another way of producing a family of non-collapsing  moduli of continuity is through a sort of  ``shoring-up" process. 

\begin{definition}[Shoring-up] A sequence of moduli of continuity $(\sigma_{n})_{n\in\mathbb{N}}$ is said to be shored-up if there exists a sequence of positive numbers $(\gamma_{n})_{n\in\mathbb{N}}$ such that $\gamma_{n}\rightarrow0$ satisfying
\[
\inf\limits_{n}\sigma_{n}(\gamma_{n})>0,
\]
for every $n\in\mathbb{N}$.
\end{definition}

Here is a simple proposition relating the notion of shored-up sequence and non-collapsing moduli of continuity.

\begin{proposition}\label{prop_shoringup}
If a sequence of moduli of continuity $(\sigma_{n})_{n\in\mathbb{N}}$ is
shored-up then $\Gamma := \cup_{n\in\mathbb{N}} \left \{ \sigma_{n}  \right \}$ is non-collapsing.
\end{proposition}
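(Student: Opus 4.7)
The plan is to invoke the characterization from Proposition \ref{equiv}: it suffices to show that for any sequence $(f_n)_{n\in\mathbb{N}} \subset \Gamma$ and any fixed $a > 0$ in the common domain, one has $\liminf_{n\to\infty} f_n(a) > 0$. Since $\Gamma = \cup_n \{\sigma_n\}$, each $f_n$ equals $\sigma_{k(n)}$ for some index $k(n) \in \mathbb{N}$, so the task reduces to controlling $\sigma_{k(n)}(a)$ from below uniformly in $n$.

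The key observation is that the shoring-up hypothesis provides a positive lower bound $c := \inf_m \sigma_m(\gamma_m) > 0$, and the auxiliary sequence $\gamma_m \to 0$. Given $a > 0$, I would first choose $M \in \mathbb{N}$ large enough so that $\gamma_m < a$ whenever $m \geq M$; this is possible because $\gamma_m \to 0$. Then I split the indices $n$ into two groups according to whether $k(n) \geq M$ or $k(n) < M$.

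For $n$ with $k(n) \geq M$, the monotonicity of $\sigma_{k(n)}$ combined with $\gamma_{k(n)} < a$ yields
\[
f_n(a) \;=\; \sigma_{k(n)}(a) \;\geq\; \sigma_{k(n)}(\gamma_{k(n)}) \;\geq\; c \;>\; 0.
\]
For the finitely many remaining indices $k(n) \in \{1, \ldots, M-1\}$, the value $f_n(a) = \sigma_{k(n)}(a)$ belongs to the finite set $\{\sigma_j(a) : j < M\}$, each element of which is strictly positive since every $\sigma_j$ is a modulus of continuity with $a > 0$. Setting $c' := \min_{j < M} \sigma_j(a) > 0$, we therefore obtain $f_n(a) \geq \min(c, c') > 0$ uniformly in $n$, which gives the desired liminf lower bound.

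I do not foresee any genuine obstacle here; the argument is a clean two-case dichotomy between the ``tail'' (handled by the shoring-up bound and monotonicity) and the ``head'' (handled by finiteness). The only subtlety worth flagging is that $(f_n)$ need not enumerate $\Gamma$ injectively, which is precisely why the split into $k(n) \geq M$ and $k(n) < M$ is needed, rather than a direct appeal to $k(n) \to \infty$.
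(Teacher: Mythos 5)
Your proof is correct and follows essentially the same route as the paper: a split at a threshold index into a tail, controlled by monotonicity plus the shoring-up infimum, and a finite head where strict positivity of each individual modulus at $a>0$ gives a positive minimum. The only cosmetic difference is that you phrase it via an arbitrary sequence $(f_n)\subset\Gamma$ to use part (2) of Proposition \ref{equiv}, while the paper directly establishes $\inf_n\sigma_n(s)>0$ for every $s>0$; the underlying argument is the same.
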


\begin{proof}
For all $s>0,$ let $n_{s}$ be an integer such that $\gamma_{n}<s$ for all
$n>n_{s}.$ Since all the functions $\sigma_{n}$ are non-decreasing, we have
$\sigma_{n}\left(  \gamma_{n}\right)  \leq\sigma_{n}\left(  s\right)  $ for
all $n>n_{s}.$ Thus%
\[
0<\inf\limits_{n>n_{s}}\sigma_{n}(\gamma_{n})\leq\inf\limits_{n>n_{s}}%
\sigma_{n}\left(  s\right)  .
\]
Since $\sigma_{1}\left(  s\right)  >0,\ldots,\sigma_{n_{s}}\left(  s\right)
>0,$ we conclude that%
\[
\inf\limits_{n}\sigma_{n}\left(  s\right)  >0
\]
and $(\sigma_{n})_{n\in\mathbb{N}}$ is non collapsing.
\end{proof}

%%%%%%%%%%%%%%%%%%
\section{Tangential analysis}\label{sec_gta}

In this section we establish an approximation result, relating \eqref{sigma-eq} and \eqref{eq_pdewxi} with the solutions to the homogeneous, uniformly elliptic, problem $F=0$. The approximating function whose existence is ensured by the next proposition plays a pivotal role in producing oscillation controls for the solutions to \eqref{sigma-eq}.

In what follows, we translate A\ref{assump_f} into a smallness condition for the source term $f$. In fact, throughout this section, we require 
\begin{equation}\label{eq_smallness}
	\left\|u\right\|_{L^\infty(B_1)}\,\leq\,1 \hspace{.4in}\mbox{and}\hspace{.4in}\left\|f\right\|_{L^\infty(B_1)}\,<\,\varepsilon,
\end{equation}
for some $\varepsilon>0$ yet to be determined. To see the conditions in \eqref{eq_smallness} are not restrictive, consider the function
\[
	v(x)\,:=\,\frac{u(rx)}{K},
\]
for $0<r\ll 1$ and $K>0$ to be defined. Notice that $v$ satisfies
\begin{equation}\label{eq_bareq}
	\overline{\sigma}\left(|Dv|\right)\overline{F}(D^2v)\,=\,\overline{f}\hspace{.4in}\mbox{in}\hspace{.1in}B_1,
\end{equation}
where
\[
	\overline{\sigma}(t)\,:=\,\sigma\left(\frac{K}{r}t\right),\hspace{.4in}\overline{F}(M)\,:=\,\frac{r^2}{K}F\left(\frac{K}{r^2}M\right)
\]
and
\[
	\overline{f}(x)\,:=\,\frac{r^2}{K}f(rx).
\]
Notice that 
\[
	\overline{\sigma}^{-1}(t)\,:=\,\frac{r}{K}\sigma^{-1}(t).
\]
Indeed,
\[
	\overline{\sigma}^{-1}(\overline{\sigma}(t))\,=\,\overline{\sigma}^{-1}\left(\sigma\left(\frac{K}{r}t\right)\right)\,=\,\frac{r}{K}\sigma^{-1}\left(\sigma\left(\frac{K}{r}t\right)\right)\,=\,t.
\]
Hence, by choosing $r<K$, it follows easily that
\[
	\int_0^1 \frac{\overline{\sigma}^{-1}(t)}{t} {\bf d}t \le \int_0^1 \frac{{\sigma}^{-1}(t)}{t} {\bf d}t \quad \text{ and} \quad \overline{\sigma}(1)\,=\,\sigma\left(\frac{K}{r}\right)\,\geq\,\sigma(1)\,\geq\,1.
\]
Hence, $\overline{\sigma}$ meets Assumption \ref{assump_dini}. Clearly, $\overline{F}$ is a $(\lambda,\Lambda)$-elliptic operator. Finally, by setting
\[
	r\,:=\,\varepsilon\hspace{.4in}\mbox{and}\hspace{.4in}K\,:=\,\frac{1}{\left\|u\right\|_{L^\infty(B_1)}\,+\,\left\|f\right\|_{L^\infty(B_1)}}
\]
we produce \eqref{eq_smallness} and find that \eqref{eq_bareq} falls within the same class as \eqref{sigma-eq}. 
\begin{proposition} \label{approx_lemma}
Let $\mathfrak{S}$ be a  set of non-collapsing moduli of continuity satisfying \eqref{normalization-sigma} and $u \in {C}(B_1)$ be a normalized viscosity solution of an equation of the form
\[
	{\sigma} \left ( \left|  D u + \xi \right | \right )\cdot F\left (D^2 u \right )=f \quad \mbox{in} \quad B_1,
\]
 where $\xi \in \mathbb{R}^d$, $\sigma \in \mathfrak{S}$, $F$ satisfies A\ref{assump_F},  and  $f$ verifies A\ref{assump_f}. 
Given $\delta >0$, there exists $\varepsilon=\varepsilon(\delta, \lambda,\Lambda,\mathfrak{S})>0$ such that if $f \in B_\varepsilon(L^\infty(B_1))$ then we can find a function $h \in B_L\left ( {C}^{1,\beta}(B_{1/2}) \right )$ such that
$$
	d_{L^\infty(B_{1/2})} \left (u, h \right ) < \delta,
$$
where $L$ and $\beta$ are universal numbers, in particular independent of $\mathfrak{S}$, $\delta$ and $\epsilon$.
\end{proposition}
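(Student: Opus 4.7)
The plan is a compactness argument by contradiction, whose backbone is to show that, after suitable extraction, $u$ is approximated in $L^\infty(B_{1/2})$ by a viscosity solution of the homogeneous, uniformly elliptic equation $F_\infty(D^2 h) = 0$; the classical interior $C^{1,\beta}$--theory of Krylov--Safonov, Caffarelli and Trudinger then delivers the universal approximator $h$ and fixes the values of $L$ and $\beta$ from the statement. First I would negate the conclusion to obtain sequences $u_k$ of normalized viscosity solutions to $\sigma_k(|Du_k + \xi_k|) F_k(D^2 u_k) = f_k$ in $B_1$, with $\sigma_k \in \mathfrak{S}$, the $F_k$ sharing ellipticity constants $(\lambda, \Lambda)$ and $F_k(0) = 0$, $\xi_k \in \mathbb{R}^d$ arbitrary, $\|f_k\|_{L^\infty(B_1)} \to 0$, but with each $u_k$ remaining at $L^\infty(B_{1/2})$--distance at least $\delta_0 > 0$ from every element of $B_L(C^{1,\beta}(B_{1/2}))$. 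Proposition \ref{teo_lipschitz} provides a uniform $C^\gamma$--estimate on the $u_k$ that is independent of $\xi_k$ and of the specific choice of $\sigma_k$ (thanks to the normalization \eqref{normalization-sigma}), so Arzel\`a--Ascoli yields $u_k \to u_\infty$ uniformly on $\overline{B_{1/2}}$. The family of $(\lambda,\Lambda)$--uniformly elliptic operators $F$ with $F(0) = 0$ is compact under local uniform convergence on $\mathcal{S}(d)$, so I may also assume $F_k \to F_\infty$ with $F_\infty$ still satisfying A\ref{assump_F}.

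The decisive step is to show that $u_\infty$ solves $F_\infty(D^2 u_\infty) = 0$ in the viscosity sense on $B_{1/2}$. Fix $\varphi \in C^2$ with $u_\infty - \varphi$ attaining a strict local maximum at $x_0 \in B_{1/2}$. Uniform convergence produces contact points $x_k \to x_0$ for $u_k - \varphi$, yielding the viscosity inequality
\[
  \sigma_k\bigl(|D\varphi(x_k) + \xi_k|\bigr)\, F_k(D^2\varphi(x_k)) \leq f_k(x_k).
\]
Two alternatives may occur along a subsequence. If $\sigma_k(|D\varphi(x_k) + \xi_k|) \geq c_0 > 0$, I divide by this quantity and pass to the limit to obtain $F_\infty(D^2\varphi(x_0)) \leq 0$. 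If instead $\sigma_k(|D\varphi(x_k) + \xi_k|) \to 0$, the non-collapsing hypothesis on $\mathfrak{S}$ (Definition \ref{def_ncoll}) forces $|D\varphi(x_k) + \xi_k| \to 0$, so $\xi_k \to -D\varphi(x_0)$ and the equation degenerates at the contact point. To extract usable information I would test against the linearly perturbed function $\varphi_\epsilon(x) := \varphi(x) - \epsilon\, e \cdot (x - x_0)$, for $\epsilon > 0$ small and $e \in \mathbb{S}^{d-1}$ fixed. Strictness of the maximum guarantees that $u_\infty - \varphi_\epsilon$ still attains a local maximum at some $\tilde x_\epsilon \to x_0$, and the new contact points $\tilde x_{\epsilon,k}$ for $u_k - \varphi_\epsilon$ satisfy
\[
  D\varphi_\epsilon(\tilde x_{\epsilon,k}) + \xi_k \longrightarrow D\varphi(\tilde x_\epsilon) - \epsilon e - D\varphi(x_0),
\]
a vector whose norm is bounded below by $\epsilon/2$ for $\epsilon$ small enough, since $\tilde x_\epsilon \to x_0$. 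Proposition \ref{equiv} then furnishes a uniform lower bound $\sigma_k(|\cdot|) \geq c(\epsilon) > 0$ along the sequence. Dividing the corresponding viscosity inequality by this bound and using $D^2\varphi_\epsilon = D^2\varphi$, the limit $k \to \infty$ gives $F_\infty(D^2\varphi(\tilde x_\epsilon)) \leq 0$; letting $\epsilon \to 0$ recovers $F_\infty(D^2\varphi(x_0)) \leq 0$. The supersolution inequality follows by the symmetric argument.

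With the limit equation established, the interior $C^{1,\beta}$--theory of Caffarelli and Trudinger furnishes universal constants $L, \beta > 0$ depending only on $d, \lambda, \Lambda$ such that $\|u_\infty\|_{C^{1,\beta}(B_{1/2})} \leq L$. Choosing $h := u_\infty$ then contradicts the standing assumption for all sufficiently large $k$, closing the argument. The hard part will be the perturbation step inside the viscosity passage to the limit: it is precisely there that the non-collapsing property of $\mathfrak{S}$ is indispensable, since without it a mild Lipschitz perturbation could annihilate the uniform ellipticity of the limit operator and defeat the $C^{1,\beta}$--theory on which the whole scheme rests.
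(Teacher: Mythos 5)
Your overall strategy (contradiction, Proposition \ref{teo_lipschitz} for compactness, passage to a limit PDE, then Caffarelli--Trudinger $C^{1,\beta}$ theory for $F_\infty=0$) matches the paper, and Steps~1--2 and~5 of the paper's argument are faithfully reproduced. The gap lies entirely in the passage to the limit equation in the degenerate regime, which is precisely the technical crux of the proposition.

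Your claim that after the linear perturbation $\varphi_\epsilon(x) = \varphi(x) - \epsilon\, e\cdot(x-x_0)$ the limiting gradient $D\varphi(\tilde x_\epsilon) - \epsilon e - D\varphi(x_0)$ has norm bounded below by $\epsilon/2$ \emph{because $\tilde x_\epsilon \to x_0$} is not correct: the convergence $\tilde x_\epsilon \to x_0$ gives no rate, and in general $|\tilde x_\epsilon - x_0|$ is comparable to (or larger than) $\epsilon$, so $|D\varphi(\tilde x_\epsilon) - D\varphi(x_0)|$ can be of order $\epsilon$ and cancel $\epsilon e$ exactly. Take $u_\infty \equiv 0$ and $\varphi(x) = |x-x_0|^2$: then $u_\infty - \varphi$ has a strict max at $x_0$ and the degenerate case arises ($\xi_k \to -D\varphi(x_0)=0$). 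The perturbed contact point is $\tilde x_\epsilon = x_0 + \tfrac{\epsilon}{2}e$, and
\[
D\varphi(\tilde x_\epsilon) - \epsilon e - D\varphi(x_0) \;=\; 2(\tilde x_\epsilon - x_0) - \epsilon e \;=\; 0
\]
for every $e \in \mathbb{S}^{d-1}$ and every $\epsilon>0$. No lower bound is available, and the division by $\sigma_k$ is impossible. More generally, if the strict maximum of $u_\infty - \varphi$ is ``quadratic,'' i.e.\ $(u_\infty-\varphi)(x) \approx -\tfrac12 (x-x_0)^T Q(x-x_0)$ with $Q>0$, then $\tilde x_\epsilon - x_0 \approx \epsilon Q^{-1}e$ and the residual gradient is $\approx \epsilon(D^2\varphi(x_0) Q^{-1} - I)e$, which vanishes whenever $e \in \ker(D^2\varphi(x_0)-Q)$; a fixed, unadapted choice of $e$ offers no protection, and $D^2\varphi(x_0)$ can have mixed sign so the conclusion is not automatic from ellipticity. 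This is precisely why the paper does \emph{not} perturb linearly. Instead it (i) reduces to quadratic test polynomials and splits $\mathbb{R}^d = E \oplus G$ along the positive/non-positive eigenspaces of $M=D^2 p$, and (ii) perturbs by a cone $\kappa\, |P_E x|$, whose gradient has \emph{fixed} norm $\kappa$ wherever $P_E x \neq 0$ and whose interaction with $Mx$ is constructive on $E$, yielding $|\kappa\, P_E\tilde x/|P_E\tilde x| + M\tilde x| \geq \kappa$ by the eigenvalue computation in Step~4; when $P_E\tilde x = 0$ the corner of the cone allows testing with the whole family $\kappa\langle P_E x, e\rangle$ and taking the supremum over $e$. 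You would need this (or a comparably robust) device in place of the linear perturbation before the rest of your argument can go through.
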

\begin{proof} For ease of presentation we split the proof in five steps.

\bigskip

\noindent{\bf Step 1 -} Suppose the thesis of the lemma fails to hold. Then there exist sequences $({\sigma}_j)_{j \in \mathbb{N}}, ({\xi}_j)_{j \in \mathbb{N}},(u_j)_{j \in \mathbb{N}}, (F_j)_{j \in \mathbb{N}}, (f_j)_{j \in \mathbb{N}}$ and a number ${\delta}_0 > 0$ such that, for every $j\in\mathbb{N}$, we have
\begin{enumerate}
\item $F_j \colon \mathcal{S}(d)\to\mathbb{R}$ is a $(\lambda,\Lambda)$-elliptic operator;
\item $\sigma_j$ is a modulus of continuity satisfying $\sigma_j(0)=0$ and $\sigma_j(1)\geq1$. In addition, if $\sigma_j(a_j)\to 0$ then $a_j\to 0$.  
\item $f_j\in L^\infty(B_1)$ is such that
\[
	\left\|f_j\right\|_{L^\infty(B_1)}\,<\,\frac{1}{j};
\]
\item 
\begin{equation}\label{eq_pedej}
	\sigma_j(|Du_j + \xi_j|)F_j(D^2 u_j)\,=\,f_j \hspace{.4in} \mbox{in} \hspace{.1in} B_1,
\end{equation}
\end{enumerate}
however, 
\begin{equation}\label{eq_contradictionapprox}
	\sup_{x \in B_{1/2}}\left|u_j(x)\,-\,h(x)\right|\,\geq\,\delta_0
\end{equation}
for every $h\in B_L \left ( {C}^{1,\beta}_{loc}(B_1) \right )$, with $\beta >0$ and $L>1$ to be chosen.
\bigskip

\noindent{\bf Step 2 -} Because ellipticity is uniform along the sequence $(F_j)_{j\in\mathbb{N}}$, it follows that $F_j\to F_\infty$ as $j\to\infty$, through a subsequence if necessary. In addition, it follows from Proposition \ref{teo_lipschitz} that $(u_j)_{j\in\mathbb{N}}$ converges uniformly to a function $u_\infty$. Our goal is to prove that 
\[
	F_\infty(D^2u_\infty)\,=\,0 \hspace{.4in} \mbox{in} \hspace{.1in} B_{9/10}.
\]
To that end, introduce the second order polynomial $p(x)$, defined as
\[
	p(x)\,:=\,u_\infty(y)\,+\,b\cdot (x\,-\,y)\,+\,\frac{1}{2}(x\,-\,y)^TM(x\,-\,y);
\]
it is clear that $p(y)=u_\infty(y)$; suppose without loss of generality that $p(x)\leq u_{\infty}(x)$ for $x\in B_{3/4}$. Our goal is to verify that 
\begin{equation}\label{eq_approxgoal}
	F_\infty(M)\leq 0. 
\end{equation}

\bigskip

\noindent{\bf Step 3 - } For $0<r\ll 1$ fixed, let $(x_j)_{j\in\mathbb{N}}$ be defined by
\[
	p(x_j)\,-\,u_j(x_j)\,:=\,\max_{x\in B_r}\left(p(x)\,-\,u_j(x_j)\right).
\]
We infer from \eqref{eq_pedej} that 
\[
	\sigma_j(\left|b\,+\,\xi_j\right|)F_j(M)\,\leq\,f_j(x_j).
\]
If $(\xi_j)_{j\in\mathbb{N}}$ is an unbounded sequence, consider the (renamed) subsequence satisfying $|\xi_j|>j$, for every $j\in\mathbb{N}$. There exists $j^*\in\mathbb{N}$ such that 
\[
	|b\,+\,\xi_j|\,>\,1
\]
for every $j>j^*$. From Assumption \ref{assump_dini} we have
\[
	F_j(M)\,\leq\,\sigma_j(|b\,+\,\xi_j|)F_j(M)\,\leq\,f_j(x_j),
\]
for $j>j^*$. By letting $j\to\infty$, we obtain \eqref{eq_approxgoal}. Conversely, if $(\xi_j)_{j\in\mathbb{N}}$ is bounded, at least through a subsequence
\[
	b\,+\,\xi_j\,\longrightarrow b\,+\,\xi^*.
\]
If $|b\,+\,\xi^*|>0$, we know $\sigma_j(|b+\xi_j|)\nrightarrow 0$. Hence
\[
	F_j(M)\,\leq\,\frac{f_j(x_j)}{\sigma_j(|b\,+\,\xi_j|)}\,\longrightarrow\,0
\]
and \eqref{eq_approxgoal} follows. If, on the other hand, $|b\,+\,\xi^*|=0$, we distinguish two cases. The first is $b\equiv 0$ and $\xi_j\to 0$. If there is a subsequence $(\xi_j)_{j\in\mathbb{N}}$ for which $\xi_j\neq 0$, the previous reasoning applies and the argument is complete. 

On the opposite, it can be $b=\xi_j=0$ for every $j\in\mathbb{N}$, sufficiently large. This case is tackled in the next step.

\bigskip

\noindent{\bf Step 4 - }We work under the assumption $b\equiv\xi_j\equiv 0$. Notice that if $\spec(M)\subset(-\infty,0]$, ellipticity produces \eqref{eq_approxgoal}; in fact
\[
	F_{\infty}(M)\,\leq\,\lambda\sum_{i=1}^d\tau_i\,\leq\,0,
\]
where $\left\lbrace\tau_i,\;i=1,\,\ldots,\,d\right\rbrace$ are the eigenvalues of $M$. Hence, we also suppose $M$ has $k>0$ strictly positive eigenvalues. Let $(e_i)_{i=1}^k$ be the associated eigenvectors and define
\[
	E\,:=\,\Span\left\lbrace e_1,\,e_2,\ldots,\,e_k \right\rbrace.
\]
Consider the orthogonal sum $\mathbb{R}^d=E\oplus G$ and the orthogonal projection $P_E$ on $E$. Define the test function 
\[
	\varphi(x)\,:=\,\kappa\sup_{e\in\mathbb{S}^{d-1}}\left\langle P_Ex,e\right\rangle\,+\,\frac{1}{2}x^TMx.
\]
Because $u_j\to u_\infty$ locally uniformly, and $2^{-1}x^TMx$ touches $u_\infty$ at zero, the stability of minimizers implies that $\varphi$ touches $u_j$ at $x_j^\kappa\in B_r$, for every $0<\kappa\ll 1$ and  $j\gg 1$. 

Suppose $x^\kappa_j\in G$. In this case, $\varphi$ touches $u_j$ at $x_j^\kappa$, regardless of the direction $e\in\mathbb{S}^{d-1}$. It follows that 
\[
	\sigma_j\left(|Mx^\kappa_j\,+\,\kappa e|\right)F_j(M)\,\leq\,f_j(x_j)
\]
for every $e\in\mathbb{S}^{d-1}$. By taking supremum with respect to the direction $e$ on both sides of the former inequality, and noticing that
\[
	\kappa\,\leq\,\sup_{e\in\mathbb{S}^{d-1}}|Mx^\kappa_j\,+\,\kappa e|,
\]
we obtain
\[
	F_j(M)\,\leq\,\frac{f_j(x^\kappa_j)}{\sigma_j(\kappa)}\,\longrightarrow \, 0
\]
as $j\to \infty$.  To complete the proof we focus on the case $P_Ex^\kappa_j\neq 0$. Here
\[
	\sup_{e\in\mathbb{S}^{d-1}}\left\langle P_Ex^\kappa_j,e\right\rangle\,=\,|P_ex^\kappa_j|.
\]
From the information available for $u_j$, we obtain
\[
	\sigma_j\left(\left|Mx^\kappa_j\,+\,\kappa\frac{P_Ex^\kappa_j}{|P_Ex^\kappa_j|}\right|\right)F_j\left(M\,+\,\kappa\left(Id\,+\,\frac{P_Ex^\kappa_j}{|P_Ex^\kappa_j|}\otimes\frac{P_Ex^\kappa_j}{|P_Ex^\kappa_j|}\right)\right)\,\leq\,f_j(x_j^\kappa).
\]

Write $x^\kappa_j$ as
\[
	x^\kappa_j,=\,\sum_{i=1}^da_ie_i,
\]
where $\left\lbrace e_i,\;i=1,\,\ldots,\,d\right\rbrace$ are the eigenvectors of $M$. Hence,
\[
	Mx^\kappa_j\,=\,\sum_{i=1}^k\tau_ia_ie_i\,+\,\sum_{i=k+1}^d\tau_ia_ie_i,
\]
with $\tau_i>0$ for $i=1,\,\ldots,\,k$. We then obtain
\[
	\begin{split}
		\kappa\,&\leq\,\kappa\,+\,\frac{1}{|P_Ex^k_j|}\sum_{i=1}^k\tau_ia_i^2\,\leq\, \kappa\,+\,\frac{1}{|P_Ex^k_j|}\left\langle \sum_{i=1}^d\tau_ia_ie_i,\sum_{i=1}^k\tau_ia_ie_i\right\rangle\\
			&\leq\,\left\langle Mx^\kappa_j\,+\,\kappa\frac{P_Ex^\kappa_j}{|P_Ex^\kappa_j|},\frac{P_Ex^\kappa_j}{|P_Ex^\kappa_j|}\right\rangle\\
			&\leq\left |Mx^\kappa_j\,+\,\kappa\frac{P_Ex^\kappa_j}{|P_Ex^\kappa_j|}\right|.
	\end{split}
\]
Once again we get
\[
	F_j(M)\,\leq\,F_j\left(M\,+\,\kappa\left(Id\,+\,\frac{P_Ex^\kappa_j}{|P_Ex^\kappa_j|}\otimes\frac{P_Ex^\kappa_j}{|P_Ex^\kappa_j|}\right)\right)\,\leq\,\frac{f_j(x^\kappa_j)}{\sigma_j(\kappa)}\,\longrightarrow \, 0
\]
as $j\to \infty$.

\bigskip

\noindent{\bf Step 5 - }Hence, we conclude that $F_\infty(M)\leq 0$ and, therefore, $u_\infty$ is a supersolution to $F_\infty=0$ in the viscosity sense. To verify that $u_\infty$ is also a subsolution is analogous and we omit the details. Standard results in the regularity theory of viscosity solutions to homogeneous elliptic equations, \cite{Trundiger} and \cite{Caffarelli}, yield $u_\infty\in{C}^{1,\beta}_{loc}(B_1)$ for some $\beta\in(0,1)$. By setting $h:=u_\infty$ we obtain a contradiction and complete the proof.
\end{proof}

\section{Existence of approximating hyperplanes} \label{sct Approx}

Let us move forward with the proof of Theorem \ref{main_theorem1}. Hereafter let $L>0$ and $0<\beta < 1$ be the universal numbers from Proposition \ref{approx_lemma}.

As to ease the presentation, let us define two new moduli of continuity:
$$
	\gamma(t):=t\sigma(t)\quad \text{ and} \quad \omega(t):=\gamma^{-1}(t).
$$
Next we make a first choice of constants $0<r < \mu_1 < 1$, by dividing the analysis in two cases:

\medskip

\noindent {\bf Case 1.} If $t^\beta=o(\omega (t))$, we choose $0< r <1/2$ so small that
$$
	2Lr^\beta = \omega(r) =: \mu_1 > r.
$$
This is the most interesting case, for which the degeneracy law is stronger than $t^{\frac{1}{\beta} - 1}$. 

\noindent {\bf Case 2.} If $\omega(t)=O(t^\beta)$, we fix $0< \alpha < \beta$ and make  $0< r <1/2$ so small that
$$
	2Lr^\beta =  r^\alpha  =: \mu_1 > r.
$$
Notice that, once fixed $0< \alpha < \beta$, the above choice becomes universal. 

In what follows we shall treat both cases concomitantly. Define, hereafter, the ratio 
$$
	0< \theta := \frac{r}{\mu_1} <1.
$$
Next, under Assumption A\ref{assump_dini}, we know the sequence 
$$
	\left ( a_k \right )_{k\in \mathbb{N}} := \left ( \sigma^{-1} \left (\theta^k \right ) \right )_{k\in \mathbb{N}}
$$
belongs to $\ell_1$. We apply Lemma \ref{DP} to the sequence $\left ( a_k \right )_{k}$ with, $0<\delta < \frac{1}{10}$ fixed and $0<\epsilon < 1$ chosen in such a way 
$$
	\epsilon \left (1+\delta \right) = 1
$$
This creates a sequence of positive numbers $\left (c_k \right )_{k} \in c_0$ for which 
\begin{equation}\label{summable}
	\frac{19}{22}  \sum\limits_{i=1}^\infty  \sigma^{-1} \left (\theta^k \right ) \le \sum\limits_{i=1}^\infty  \frac{\sigma^{-1} \left (\theta^k \right )}{c_k} \le  \sum\limits_{i=1}^\infty  \sigma^{-1} \left (\theta^k \right ).
\end{equation}
In the sequel, we generate a shored-up sequence of moduli of continuity by the following  recursive formula:
\begin{equation}\label{rec1}
	\begin{array}{lll}
		\displaystyle \sigma_0(t) &=& \displaystyle  \sigma(t); \vspace{0.05in} \\
		\displaystyle  \sigma_1(t) &=&\displaystyle   \frac{\mu_1}{r} \sigma(\mu_1 t); \vspace{0.05in} \\
		\displaystyle  \sigma_2(t) &=& \displaystyle  \frac{\mu_1\mu_2}{r^2} \sigma(\mu_1\mu_2 t); \vspace{0.05in} \\
		& \displaystyle  \vdots&	\vspace{0.05in} \\
		\displaystyle  \sigma_n(t) &=&\displaystyle   \frac{\mu_1\mu_2\cdots\mu_n}{r^n} \sigma(\mu_1\mu_2\cdots\mu_n t),
	\end{array}
\end{equation}
where $\mu_1>r$ has already been chosen and for  $k\ge 2$,  the value of $\mu_k$ is determined through the following new algorithm:

\medskip

\noindent {\it If}
$$
	 \frac{\mu_1^2}{r^2} \sigma \left ( \mu_1^2 \cdot c_2 \right ) \ge 1,
$$ 
\noindent {\it then}
$$
	\mu_2 = \mu_1;
$$
\noindent {\it otherwise} 
$$
	\mu_1 < \mu_2 < 1
$$
is chosen such that
$$
	 \frac{\mu_1\mu_2}{r^2} \sigma \left ( (\mu_1 \mu_2) \cdot c_2 \right ) = 1,
$$ 
where $c_2$ is the $2$nd element of the sequence $\left (c_k \right )_{k} \in c_0$ for which \eqref{summable} is verified. 

\medskip

Next we apply the above algorithm recursively, that is: once chosen $r < \mu_1 \le \mu_2 \le \cdots \le \mu_k$ we decide on the value of $\mu_{k+1}$ as:
$$
	\text{ if selecting }  \mu_{k+1} = \mu_k \text{ yields } \sigma_{k+1}(c_{k+1}) \ge 1,
$$
we set $\mu_{k+1} = \mu_k$. Otherwise, $\mu_{k+1} > \mu_k$ is chosen such that
$$
	\sigma_{k+1} \left (c_{k+1} \right ) = 1,
$$
where, as before, $c_{k+1}$ is the $(k+1)$th element of the sequence $\left (c_k \right )_{k} \in c_0$ crafted in Lemma \ref{DP}, for which \eqref{summable} holds. 

Let $\mathfrak{S}$ denote the family of moduli of continuity generated through the described algorithm: 
$$
	\mathfrak{S} := \left \{ \sigma_0(t), \sigma_1(t), \cdots, \sigma_n(t), \cdots \right \}.
$$
According to Proposition \ref{prop_shoringup}, this is a non-collapsing family of moduli of continuity.

\begin{proposition}\label{step_1}
Let $u \in {C}(B_1)$ be a normalized viscosity solution to \eqref{eq_pdewxi}. Suppose A\ref{assump_F}, A\ref{assump_dini}, and A\ref{assump_f}  are in force. There exists an $\epsilon > 0$ such that if $\|f\|_{L^\infty(B_1)} < \epsilon$, then, one can find  an affine function $\ell(x)$ and a universal constant $C>0$ such that 
\[
	\ell(x)\,=\, a + b \cdot x,  \hspace{.3in} \mbox{with} \hspace{.3in} |a|\,+\,|b|\,\leq\,C
\]
and
\[
	\sup_{x \in B_r} \left| u(x) \,- \,\ell(x) \right|\, \leq \,  \mu_1 \cdot r.
\]
\end{proposition}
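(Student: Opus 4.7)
The plan is to feed the PDE to the tangential analysis of Proposition~\ref{approx_lemma} and take $\ell$ to be the first-order Taylor polynomial, at the origin, of the resulting $C^{1,\beta}$ approximant. The family $\mathfrak{S}$ constructed just above is non-collapsing (Proposition~\ref{prop_shoringup}), $\sigma_0 = \sigma\in\mathfrak{S}$, so Proposition~\ref{approx_lemma} is available with this $\mathfrak{S}$, yielding universal constants $L$ and $\beta$. The total error on $B_r$ will then split into an approximation-error contribution (controlled by $\delta$) and a Taylor-remainder contribution (controlled by $L r^{1+\beta}$); the choice of $(r,\mu_1)$ made in Section~\ref{sct Approx} is precisely tailored so that these two pieces sum to $\mu_1\cdot r$.

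\textbf{Execution.} First, I would apply Proposition~\ref{approx_lemma} with accuracy $\delta>0$ (to be fixed momentarily), obtaining a threshold $\varepsilon=\varepsilon(\delta,\lambda,\Lambda,\mathfrak{S})>0$ with the property that whenever $\|f\|_{L^\infty(B_1)}<\varepsilon$ one has $h\in B_L\bigl(C^{1,\beta}(B_{1/2})\bigr)$ satisfying $\|u-h\|_{L^\infty(B_{1/2})}<\delta$. Second, I would define
\[
    \ell(x) := a + b\cdot x, \qquad a:=h(0),\quad b:=Dh(0).
\]
The $C^{1,\beta}$-bound on $h$ immediately yields $|a|+|b|\le 2L=:C$, which is universal. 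Third, a standard Taylor estimate via the $\beta$-H\"older continuity of $Dh$ gives, for every $x\in B_r$,
\[
    |h(x)-\ell(x)| \;=\; \left|\int_0^1\bigl(Dh(tx)-Dh(0)\bigr)\cdot x\,dt\right| \;\le\; \frac{L}{1+\beta}\,r^{1+\beta} \;\le\; L\,r^{1+\beta}.
\]
A triangle inequality then produces $|u(x)-\ell(x)|\le \delta + L r^{1+\beta}$ on $B_r$.

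\textbf{Closing the estimate.} It remains to pick $\delta:=L\,r^{1+\beta}$ and exploit the definition of $\mu_1$. Indeed, in both cases of Section~\ref{sct Approx} one has $\mu_1 = 2L r^\beta$, hence $\mu_1\cdot r = 2L r^{1+\beta}$. Consequently, $|u(x)-\ell(x)|\le 2L r^{1+\beta}=\mu_1\cdot r$ throughout $B_r$, and the required smallness threshold is $\epsilon := \varepsilon(L r^{1+\beta},\lambda,\Lambda,\mathfrak{S})$, which is universal since $r$, $L$, and $\beta$ are.

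\textbf{Anticipated difficulty.} The proof itself is essentially a book-keeping exercise once Proposition~\ref{approx_lemma} is granted; the real work has been front-loaded into the construction of the non-collapsing family $\mathfrak{S}$ and the compatible choice of $(r,\mu_1)$. The genuinely delicate step will come in the subsequent iteration, where one must apply the same reasoning to the rescaled functions $v_k(x)=[u(r^kx)-\ell_k(r^kx)]/\mu_1\mu_2\cdots\mu_k$ (or their analogue), maintaining simultaneously the smallness of the source, the affine bound $|a_k|+|b_k|\le C$, and membership of the rescaled degeneracy law in $\mathfrak{S}$—which is precisely why the shoring-up algorithm was arranged so as to force $\sigma_{k+1}(c_{k+1})\ge 1$. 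But at the single-step level treated by the present proposition, no further obstacle arises.
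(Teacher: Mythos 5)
Your proof is correct and follows essentially the same route as the paper: invoke Proposition~\ref{approx_lemma} with the non-collapsing family $\mathfrak{S}$, take $\ell$ to be the affine part of the $C^{1,\beta}$ approximant at the origin, split the error into the approximation piece $\delta$ and the Taylor remainder $L r^{1+\beta}$, and close using the normalization $\mu_1 = 2Lr^{\beta}$ to set $\delta=\tfrac12\mu_1 r$ (equivalently $Lr^{1+\beta}$). The paper's proof is the same argument, phrased with the approximation holding on $B_{9/10}$ rather than $B_{1/2}$, which is immaterial here since only the bound on $B_r$ with $r<1/2$ is used.
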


\begin{proof}
From Proposition \ref{approx_lemma} we infer the existence of $h\in{C}^{1,\beta}_{loc}(B_1)$ such that 
\[
	\sup_{x\in B_{9/10}}\left|u(x)\,-\,h(x)\right|\,\leq\,\delta,
\]
for some $\delta>0$, to be set further in the proof. As mentioned before, the regularity of the approximating function $h$ yields
\[
	\sup_{x\in B_r}\left|h(x)\,-\,h(0)\,-\,Dh(0)\cdot x\right|\,\leq\,Lr^{1\,+\beta}
\]
for a universal constant $L>0$ and every $0<r < 1/2$. By choosing $a:= h(0)$ and $b:=Dh(0)$ it is clear that both coefficients are universally bounded. A straightforward application of the triangular inequality yields
\[
	\begin{array}{lll}
		\sup_{x\in B_\rho}\left|u(x)\,-\,a\,-\,b\cdot x\right|\,&\leq& \,\delta\,+\,Lr^{1\,+\,\beta}  \\
		&= & \,\delta\,+ \dfrac{1}{2} \mu_1 \cdot r.
	\end{array}
\]
Choosing $\delta = \dfrac{1}{2} \mu_1 \cdot r$ sets the value of $\epsilon>0$, through Proposition \ref{approx_lemma}, and the proof is completed.
\end{proof}

The next proposition extends the statement in Proposition \ref{step_1} to arbitrarily small radii, in a discrete scale generated by a geometric sequence out from the original radius $0<r\ll 1$. Moving across those discrete scales involves a scaling argument. At this precise point of the argument, scaled solutions fail to satisfy the original equation \eqref{sigma-eq}. In turn, they satisfy
\[
	\mathscr{F}_n(D u_n +\xi_n, D^2 u_n)\,=\,f_n(x)\hspace{.4in}\mbox{in}\hspace{.1in}B_1,
\]
where, $\xi_n\in\mathbb{R}^d$ is arbitrary and at each scale the new operator $\mathscr{F}_n(\vec{p}, M)$ has law of degeneracy $\sigma_n$ and diffusion agent $F_n$. The switch from \eqref{sigma-eq} to \eqref{eq_pdewxi} is justified by the necessity of producing uniform compactness estimates available at this instance of the argument.

\begin{proposition}[Oscillation control at discrete scales]\label{prop_oscdiscrete}
Let $u \in {C}(B_1)$ be a normalized viscosity solution to \eqref{eq_pdewxi}. Suppose A\ref{assump_F}, A\ref{assump_dini}, and A\ref{assump_f} are in force. Then there exists a sequence of affine functions $(\ell_n)_{n\in\mathbb{N}}$ of the form
\[
	\ell_n(x)\,:=\,A_n\,+\,B_n\cdot x
\]
satisfying
\begin{equation}\label{eq_induction1}
	\sup_{x\in B_{r^n}}|u(x)\,-\,\ell_n(x)|\,\leq\,\left(\prod_{i=1}^n\mu_i\right)r^n,
\end{equation}	
\begin{equation}\label{eq_induction2}
	\left|A_{n+1}\,-\,A_n\right|\,\leq\,C\left(\prod_{i=1}^{n}\mu_i\right)r^{n}
\end{equation}	
and
\begin{equation}\label{eq_induction3}
	\left|B_{n+1}\,-\,B_n\right|\,\leq\,C\prod_{i=1}^{n}\mu_i
\end{equation}	
for every $n\in\mathbb{N}$.
\end{proposition}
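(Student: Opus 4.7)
The plan is a direct induction on $n$, where at each step the function is rescaled to a normalized viscosity solution of a new PDE whose degeneracy law belongs to the engineered family $\mathfrak{S}$. The base case $n=1$ is precisely Proposition \ref{step_1}, yielding $\ell_1$ with $|A_1|+|B_1| \le C$ and $\sup_{B_r}|u-\ell_1| \le \mu_1 r$; conventions $\ell_0 \equiv 0$ and $\prod_{i=1}^{0}\mu_i = 1$ make \eqref{eq_induction1}--\eqref{eq_induction3} coherent at the bottom.

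For the inductive step, assume $\ell_1,\dots,\ell_n$ have been constructed. I would introduce the rescaled function
\[
v_n(x) := \frac{u(r^n x) - \ell_n(r^n x)}{\left(\prod_{i=1}^n \mu_i\right) r^n},
\]
which is normalized in $B_1$ thanks to \eqref{eq_induction1}. A direct chain-rule computation shows $v_n$ satisfies
\[
\sigma_n\!\left(|Dv_n + \tilde{\xi}_n|\right) F_n(D^2 v_n) = f_n(x) \quad \text{in } B_1,
\]
where $F_n(M) := \tfrac{r^n}{\prod \mu_i}\, F\!\bigl(\tfrac{\prod \mu_i}{r^n} M\bigr)$ is still $(\lambda,\Lambda)$-elliptic with $F_n(0)=0$, $\sigma_n$ is the rescaled modulus from \eqref{rec1}, $\tilde{\xi}_n := (B_n + \xi)/\prod_{i=1}^n \mu_i$, and $f_n(x) := f(r^n x)$. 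A change of variables $s = \prod \mu_i \cdot t\, r^{-n}$ shows $\sigma_n^{-1}$ inherits the Dini integrability from $\sigma^{-1}$, so A\ref{assump_F}, A\ref{assump_dini}, A\ref{assump_f} are preserved in the rescaled problem.

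The crucial point now is that $\sigma_n \in \mathfrak{S}$ for every $n$, and $\mathfrak{S}$ was built to be non-collapsing through the shoring-up algorithm (Proposition \ref{prop_shoringup}). Consequently Proposition \ref{approx_lemma}, and hence Proposition \ref{step_1}, applies to $v_n$ with a \emph{single} uniform $\varepsilon > 0$; after the initial reduction of this section, $\|f\|_\infty < \varepsilon$, so $\|f_n\|_\infty \le \|f\|_\infty < \varepsilon$ uniformly in $n$. Proposition \ref{step_1} then produces an affine $\tilde{\ell}_{n+1}(x) = \tilde{A} + \tilde{B}\cdot x$ with $|\tilde{A}|+|\tilde{B}| \le C$ and $\sup_{B_r}|v_n - \tilde{\ell}_{n+1}| \le \mu_1 r \le \mu_{n+1} r$, the last inequality because the algorithm enforces $\mu_{n+1} \ge \mu_n \ge \cdots \ge \mu_1$. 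Setting
\[
\ell_{n+1}(y) := \ell_n(y) + \left(\prod_{i=1}^n \mu_i\right) r^n\, \tilde{\ell}_{n+1}(y/r^n)
\]
one reads off $A_{n+1}-A_n = (\prod_{i=1}^n \mu_i)\, r^n\, \tilde{A}$ and $B_{n+1}-B_n = (\prod_{i=1}^n \mu_i)\, \tilde{B}$, immediately giving \eqref{eq_induction2} and \eqref{eq_induction3}. Unscaling the $L^\infty$ approximation back to $B_{r^{n+1}}$ produces
\[
\sup_{B_{r^{n+1}}} |u - \ell_{n+1}| = \left(\prod_{i=1}^n \mu_i\right) r^n \sup_{B_r} |v_n - \tilde{\ell}_{n+1}| \le \left(\prod_{i=1}^{n+1} \mu_i\right) r^{n+1},
\]
which is \eqref{eq_induction1} at level $n+1$, closing the induction.

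The main obstacle I anticipate is ensuring the smallness threshold $\varepsilon$ in Proposition \ref{step_1} is genuinely uniform across all scales $n$; this is exactly what the non-collapsing property of $\mathfrak{S}$ provides through the tangential analysis of Section \ref{sec_gta}. Were the $\sigma_n$ allowed to flatten near the origin (positive collapsing measure), the approximation step would fail and the rescaling scheme would break. A secondary but routine verification is that the choice $\mu_{n+1} \ge \mu_1 > r$ keeps $\prod \mu_i / r^n$ strictly bounded away from zero, so the normalization of $v_n$ is legitimate and the $F_n$ stay within the fixed ellipticity class.
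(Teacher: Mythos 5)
Your proof is correct and takes essentially the same route as the paper's: the paper defines the rescaled solution recursively ($u_{k+1} = (u_k(r\cdot) - \ell_k(r\cdot))/(\mu_{k+1}r)$) while you define the telescoped version $v_n$ directly in terms of $u$ and the accumulated affine function, but these coincide and the key mechanisms — membership of $\sigma_n$ in the shored-up, non-collapsing family $\mathfrak{S}$, uniformity of $\varepsilon$ in Proposition~\ref{step_1}, and the monotonicity $\mu_{n+1}\ge\mu_1$ that closes the induction — are identical. You also make explicit a couple of points the paper leaves implicit, notably the role of $\mu_{n+1}\ge\mu_1$ in passing from the $\mu_1 r$ bound of Proposition~\ref{step_1} to the $\mu_{n+1}r$ bound needed at level $n+1$.
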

\begin{proof}
We prove the proposition through an induction argument. As before, we proceed in steps.

\bigskip

\noindent {\bf Step 1 - }For $\mu_1$ and $\ell = \ell_0$ as in Proposition \ref{step_1},  consider the auxiliary function
\[
	u_1(x)\,:=\,\frac{u(rx)\,-\,\ell_0(rx)}{\mu_1r}.
\]
Notice that $u_1$ solves
\[
	\sigma_1\left(\left|Du_1\,+\,\frac{1}{\mu_1}D\ell\right|\right)F_1(D^2u_1)\,=\,f_1(x)\hspace{.4in}\mbox{in}\hspace{.1in}B_1,
\]
where
\[
	\sigma_1(t)\,:=\,\frac{\mu_1}{r}\sigma(\mu_1t),
\]
\[
	F_1(M)\,:=\,\frac{r}{\mu_1}F\left(\frac{\mu_1}{r}M\right)\hspace{.4in} \mbox{and} \hspace{.4in}f_1(x)\,:=\,f(rx).
\]
The selection through the algorithm preceding  Proposition \ref{step_1} ensures that $\sigma_1(1)=1$. Therefore, $u_1$ falls within the scope of this result and we infer the existence of an affine function $\ell_1$, with universal bounds, such that
\[
	\sup_{x \in B_r} \left| u_1(x) \,- \,\ell_1(x) \right|\, \leq \,r \mu_1.
\]
At this point, we define $u_2$ as
\[
	u_2(x)\,:=\,\frac{u_1(rx)\,-\,\ell_1(rx)}{\mu_2 r},
\]
for $r<\mu_1\le \mu_2$ chosen earlier. It is clear that $u_2$ satisfies 
\[
	\sigma_2\left(\left|Du_2\,+\,\frac{1}{\mu_1}D\ell_1\right|\right)F_2(D^2u_2)\,=\,f_2(x)\hspace{.4in}\mbox{in}\hspace{.1in}B_1,
\]
where, as before,
\[
	\sigma_2(t)\, =\,\frac{\mu_1\mu_2}{r^2}\sigma(\mu_1\mu_2t).
\]
The governing diffusion agent for $u_2$ is given by
\[
	F_2(M)\,:=\,\frac{r^2}{\mu_1\mu_2}F\left(\frac{\mu_1\mu_2}{r^2}M\right)
\]
and the source term $f_2(x)\,:=\,f(r^2x).$ Hence, $u_2$ meets the requirements of Proposition \ref{step_1}, which ensures the existence of an affine function $\ell_2$, with universal bounds, such that
\[
	\sup_{x \in B_r} \left| u_2(x) \,- \,\ell_2(x) \right|\, \leq \,r \mu_1.
\]

Proceeding inductively we notice that
\[
	u_{k+1}(x)\,:=\,\frac{u_k(rx)\,-\,\ell_k(rx)}{\mu_{k+1}r}
\]
solves an equation with degeneracy $\sigma_{k+1}$, given by
\[
	\sigma_{k+1}(t)\,=\,\frac{\mu_{k+1}}{r}\sigma_k\left(\mu_{k+1}t\right)\,=\,\frac{\prod_{i=1}^{k+1}\mu_i}{r^{k+1}}\sigma\left(\prod_{i=1}^{k+1}\mu_it\right).
\]
Recall, $\mu_{k+1}\ge \mu_k$ is determined in such way that either $\mu_{k+1} = \mu_k$ or else
\begin{equation}\label{key eq k+1}
	\sigma_{k+1}\left( c_{k+1}\right)\,=\,1.
\end{equation}
As before, we resort to Proposition \ref{step_1} to ensure the existence of an affine function $\ell_{k+1}$ satisfying
\[
	\sup_{x\in B_r}\left|u_{k+1}(x)\,-\,\ell_{k+1}(x)\right|\,\leq\,\mu_1 \cdot r.
\]

\bigskip

\noindent{\bf Step 2 - }Reverting back to the original solution $u$, we find
\[
	\sup_{x\in B_{r^{k}}}\left|u(x)\,-\,\ell_k(x)\right|\,\leq\,\left(\prod_{i=1}^{k}\mu_i\right) \cdot r^{k},
\]
where
\[	
	\begin{split}
		\ell_k(x)\,&:=\,\ell_1(x)\,+\,\sum_{i=2}^{k}\ell_i(r^{-1}x)\left(\prod_{i=1}^{k}\mu_i\right)r^i\\
			&=\,A_k\,+\,B_k\cdot x.
	\end{split}
\]
In addition, we have
\[
	\left|A_{k+1}\,-\,A_k\right|\,\leq\,C\left(\prod_{i=1}^{k-1}\mu_i\right)\cdot r^{k-1}
\]
and
\[
	\left|B_{k+1}\,-\,B_k\right|\,\leq\,C\left(\prod_{i=1}^{k-1}\mu_i\right),
\]
which completes the proof.
\end{proof}

\section{Convergence analysis} \label{sct conv. anal.}

In this final section we discuss the convergence of the approximating hyperplanes obtained in Section \ref{sct Approx}.  To ensure this fact, we must examine the summability of the series associated with $(A_n)_{n\in\mathbb{N}}$ and $(B_n)_{n\in\mathbb{N}}$. Such a convergence shall imply a modulus of continuity that takes the form of a sum, associated with the products $\Pi_{i=1}^n\mu_i$, which ultimately yields a proof of Theorem \ref{main_theorem1}.

\bigskip

\begin{proof}[Proof of Theorem \ref{main_theorem1}] 

The algorithm employed to craft the sequence $(\mu_n)_{n\in\mathbb{N}}$ is  key in the proof. There are two possibilities:

\medskip

\noindent Either the sequence stabilizes for some $k_0\ge 2$, that is 
\[
	\mu_{k_0} = \mu_{k_0+1} =  \mu_{k_0+2} = \cdots 
\]
or else for infinitely many $k$'s, there holds $\mu_{k} < \mu_{k+1}$. And when this happens:
\begin{equation}\label{key final proof}
	\frac{\prod_{i=1}^{k+1}\mu_i}{r^{k+1}}\sigma\left(\left[\prod_{i=1}^{k+1}\mu_i\right] c_{k+1} \right)\,=\,1.
\end{equation}

The former case falls into a classical setting, for which the convergence analysis yields in fact local ${C}^{1,\tau}$--regularity of solutions, for some $0<\tau < \beta$.  

Let us now investigate the latter case. Readily from \eqref{key final proof} one obtains
$$
	 {\sigma}_{k+1}\left( c_{k+1} \right)\,=\,1 \quad \iff \quad \frac{\prod_{i=1}^{k+1}{\mu}_i}{r^{k+1}}\sigma\left(\prod_{i=1}^{k+1} {\mu}_i \cdot c_{k+1}\right)
 = 1,
$$
which yields  
\begin{equation}\label{estimate-from Shoreup}
	\begin{array}{lll}
		\displaystyle \prod_{i=1}^{k+1} {\mu}_i  &=& \displaystyle \frac{1}{c_{k+1}} \sigma^{-1} \left ( \frac{r^{k+1}}{ \prod_{i=1}^{k+1}{\mu}_i} \right ) \vspace{0.05in} \\
		& \le & \displaystyle \frac{ \sigma^{-1} \left ( \theta^{k+1} \right )}{c_{k+1}}.
	\end{array}
\end{equation}
Estimate \eqref{summable} combined with estimate \eqref{estimate-from Shoreup} shows  the sequence 
$$
	\left ( \tau_{k} \right )_{k\in \mathbb{N}} :=\left (  \prod_{i=1}^{k} {\mu}_i \right )_{k\in \mathbb{N}}
$$ 
is summable and its $\ell_1$ norm is bounded by $\sum\limits_{i=1}^\infty \sigma^{-1}(\theta^i)$.

Therefore, it follows from \eqref{eq_induction2} and \eqref{eq_induction3} that $(A_n)_{n\in\mathbb{N}}$ and $(B_n)_{n\in\mathbb{N}}$ are Cauchy sequences. That is, there exist a real number $A_\infty$ and a vector $B_\infty$ such that
\[
	A_n\longrightarrow A_\infty\hspace{.5in}\mbox{and}\hspace{.5in}B_n\longrightarrow B_\infty.
\]
Set $\ell_\infty(x):=A_\infty+B_\infty\cdot x$. Observe also 
\[
	\left|A_\infty\,-\,A_n\right|\,\leq\,C\sum_{i=n}^\infty\tau_i r^n\hspace{.4in}\mbox{and}\hspace{.4in}\left|B_\infty\,-\,B_n\right|\,\leq\,C\sum_{i=n}^\infty\tau_i.
\]
For any $0<\rho\ll 1$ let  $n\in\mathbb{N}$ be such that 
$$
	r^{n+1}<\rho\leq r^n. 
$$
We then estimate
\[
	\begin{split}
		\sup_{x\in B_\rho}|u(x)\,-\,\ell_\infty(x)|\,&\leq\,\sup_{x\in B_{r^n}}|u(x)\,-\,\ell_n(x)|\,+\,\sup_{x\in B_{r^n}}|\ell_n(x)\,-\,\ell_\infty(x)|\\
			&\leq\,C\tau_nr^n\,+\,C\left(\sum_{i=n}^\infty\tau_i\right)r^n\\
			&\leq\, \frac{1}{r}C\left[\tau_n\,+\,\sum_{i=n}^\infty\tau_i\right]\rho\\
			&\leq\, \left(C\sum_{i=n}^\infty\tau_i\right)\rho.
	\end{split}
\]
Finally, set
\[
	\gamma(t)\,:=\,C\sum_{i=\lfloor \ln t^{-1}\rfloor}^\infty\tau_i,
\]
where $\lfloor M \rfloor := $ the biggest integer that is less than or equal to $M$. Since $\tau_i \in \ell_1$, $\gamma(t)$ is indeed a modulus of continuity. We have
\[
	\sup_{x\in B_\rho}|u(x)\,-\,u(0)\,-\,Du(0)\,\cdot\,x|\,\leq\,\gamma(\rho)\rho,
\]
and the proof of Theorem \ref{main_theorem1} is finally complete.
\end{proof}

\section{Final remarks}

 We start my commenting on the structural condition on $\sigma$. Throughout the paper we have assumed the law of degeneracy $\sigma$ is modulus of continuity. This, in particular, requires  $\sigma$ to be an increasing function, which might be a drawback for applications. Nonetheless, Theorem \ref{main_theorem1} remains true under the following relaxed condition: 
\[
	C^{-1}\rho(t)\leq\sigma(t)\leq C\rho(t),
\]
for some modulus of continuity $\rho$ whose inverse $\rho^{-1}$ is Dini continuous. Indeed, under such an assumption, if $u$ satisfies $\sigma(|Du|)F(D^2u) = f(x)$ in the viscosity sense, then 
$ \rho(|Du|)F(D^2u) = \frac{\sigma(|Du|)}{ \rho(|Du|)}f(x) := g(x) \in L^\infty$. 

Next we comment on the universality of the estimate provided in Theorem \ref{main_theorem1}. Let 
$$
	\Xi := \left \{ \sigma \colon I \to \mathbb{R}_0^{+} ~ \big | ~ \text{ is a modulus of continuity and } \sigma^{-1} \in L^1\left ((0, \tau]; \lambda^{-1} {\bf d} \lambda \right ) \right \}.
$$
For $\sigma \in \Xi$, let's denote:
$$
	 \left \| \sigma \right \|_\Xi :=  \sigma (1)  + \int_0^\tau \frac{\sigma^{-1}(\lambda)}{\lambda} {\bf d} \lambda.
$$
 Theorem \ref{main_theorem1} provides the existence of a modulus of continuity $\omega$ such that for any viscosity solution of
$\sigma (|Du|) F(D^2u) = f(x),  \text{ in } B_1,$ where $F$ is $(\lambda, \Lambda)$ uniform elliptic and $\|f\|_\infty \le C$, there holds:
$$
	\left | Du(x) - Du(y)\right | \le \omega \left ( |x-y|\right ),
$$
for all $x, y \in B_{1/4}$.  The dependence of $\omega$ with respect to $\sigma$ is, nonetheless, rather intricate, and in principle it does not depend solely upon the value $\|\sigma \|_{\Xi}$.  

It is  possible however to show  with the aid of Lemma \ref{DP-Comp}  that given a compact set $K \subset \Xi$ (with respect to $\|\cdot \|_{\Xi}$), there exists a universal modulus of continuity $\omega$, which depends only on $K$, $0< \lambda \le \Lambda$, and $C>0$, such that if $u$ is viscosity solution of
$$
	\sigma (Du) F(D^2u) = f(x), \quad \text{ in } B_1
$$
where $F$ is $(\lambda, \Lambda)$ uniform elliptic, $\|f\|_\infty \le C$ and $\sigma \in K$, then:
$$
	\left | Du(x) - Du(y)\right | \le \omega \left ( |x-y|\right ), 
$$
for all $x, y \in B_{1/4}$.

\bigskip

\noindent {\bf Acknowledgements:} PA was partially supported by CAPES and FAPERJ. %(Grant\# E-26/201.609/2018). 
DP was partially supported by CNPq %Grant 307237/2017-5 and Grant 2019/0014 
and by Para\'iba State Research Foundation. 
EP was partly supported by CNPq, by FAPERJ  and by the Instituto Serrapilheira %(Grant \#1811-25904) 
ET thanks support by UCF start-up grant. 
%This study was financed in part by the Coordena\c{c}\~ao de Aperfei\c{c}oamento de Pessoal de N\'ivel Superior - Brasil (CAPES) - Finance Code 001. 

\bigskip

\noindent\textsc{P. Andrade},
Instituto Superior T\'ecnico, Universidade de Lisboa -- ULisboa, 1049-001, Av. Rovisco Pais, Lisboa, \\
\noindent Portugal
\hfill \texttt{pedra.andrade@tecnico.ulisboa.pt}

\bigskip

\noindent\textsc{D. Pellegrino},
Federal University of  Paraiba, Jo\~ao Pessoa-PB, \\
\noindent Brazil  
\hfill \texttt{daniel.pellegrino@academico.ufpb.br}

\bigskip

\noindent\textsc{E. Pimentel},
University of Coimbra,  CMUC Department of Mathematics, Coimbra, Portugal and Pontifical Catholic University of Rio de Janeiro -- PUC-Rio, Rio de Janeiro-RJ, Brazil\hfill \texttt{edgard.pimentel@mat.uc.pt}

\bigskip

\noindent\textsc{E. Teixeira}, University of Central Florida,  Orlando FL, \\
\noindent U.S.A.
\hfill \texttt{eduardo.teixeira@ucf.edu}
%\Addresses

\end{document}